\def\R{\mathbb{R}}
\def\i{\infty}
\def\R+{\mathbb{R}_{+}}
\def\Rn{\mathbb{R}^n}
\newtheorem{theorem}{Theorem}[section]
\newtheorem{lemma}{Lemma}[section]
\newtheorem{corollary}{Corollary}[section]
\theoremstyle{definition}
\def\Lpwloc{L_{p,w}^{\rm loc}(\Rn)}
\def\Lpwloc1{L_{p,w}^{\rm loc}(\Omega)}
\def\L1loc{L_{1}^{\rm loc}(\Rn)}
\def\L1wloc{L_{1,w}^{\rm loc}(\Rn)}
\def\L1wloc1{L_{1,w}^{\rm loc}(\Omega)}
\def\dual{\,^{^{\complement}}\!}
\newtheorem{remark}{Remark}[section]
\numberwithin{equation}{section}
\newcommand{\ess}{\mathop{\rm ess \; sup}\limits}
\newcommand{\es}{\mathop{\rm ess \; inf}\limits}
\numberwithin{equation}{section}
\begin{document}

\begin{center}
\LARGE The Dirichlet problem for the uniformly higher-order elliptic equations in generalized weighted Sobolev-Morrey spaces
\end{center}

\

\centerline{\large V.S. Guliyev$^{a,b,}$
\footnote{Corresponding author : vagif@guliyev.com (V.S. Guliyev); tgadjiev@mail.az (T.S. Gadjiev); serbetci@ankara.edu.tr (A. Serbetci)}, T.S. Gadjiev$^{a}$,
A. Serbetci$^{c,d}$}

\

\centerline{$^{a}$\it Institute of Mathematics and Mechanics, Baku, Azerbaijan}

\centerline{$^{b}$\it Department of Mathematics, Dumlupinar University, Kutahya, Turkey}

\centerline{$^{c}$\it Department of Mathematics, \c{C}ankiri Karatekin University, \c{C}ankiri, Turkey}
\centerline{$^{d}$\it Department of Mathematics, Ankara University, Ankara, Turkey}

\

\begin{abstract}
A priori estimates for the weak solutions the Dirichlet problem for the uniformly higher-order elliptic equations
in a smooth bounded domain $\Omega\subset \Rn$ in generalized weighted Sobolev-Morrey spaces are obtained.
\end{abstract}

\

\noindent{\bf Key words:} Generalized weighted Sobolev-Morrey spaces, uniformly higher-order elliptic equations, a priori estimates

\noindent{\bf AMS Mathematics Subject Classification:} $~~$ 35J40, 42B20, 42B25,42B35

\section{Introduction}
$~~~$ Recall that the classical {\it Morrey spaces} $L_{p,\lambda}$ were introduced  in  \cite{Morr1938} in order to study the local behavior of the solutions of elliptic  systems.
Moreover, various Morrey spaces are defined in the process of study. Guliyev, Mizuhara and Nakai \cite{GulDoc, Miz1991, Nakai1994} introduce generalized Morrey spaces $M_{p,\varphi}$. Komori and Shirai \cite{KomShir} define weighted Morrey spaces $L_{p,\kappa}(w)$; Guliyev \cite{GulEMJ2012} give a concept of the generalized weighted Morrey spaces $M_{p,\varphi}(w)$ which could be viewed as extension of both $M_{p,\varphi}$ and $L_{p,\kappa}(w)$, study the boundedness of the classical operators and their commutators in spaces $M_{p,\varphi}(w)$ was studied (see, also \cite{GulOmAzJM,HamzTrAMEA}).

 Let ${W^{2m}_{p}(\Omega)}$ be the standard notation for Sobolev spaces. In \cite{AgmDouNir} for the solutions of uniformly elliptic equations in a smooth domain $\Omega$ the following a priori estimate
\begin{equation}\label{eq-1.1}
 \|u \|_{W^{2m}_{p}(\Omega)}\leq C \|f \|_{L_{p}(\Omega)}
\end{equation}
were obtained.
In \cite{Muc} on a bounded domain $\Omega$ with smooth boundary $\partial \Omega$ for the Laplace equation with weight $w(x)$ belonging to the Muckenhoupt class $A_{p}$ (see \cite{ChaWhe})  was proved the following a priori estimate
\begin{equation*}
 \|u \|_{W^{2}_{p}(\Omega,w)}\leq C \|f \|_{L_{p}(\Omega,w)}.
\end{equation*}

Weighted estimates for a wide class of singular integral operators has been obtained for weights in the class of Muckenhoupt $A_{p}$. Therefore, it is a natural question whether  analogous weighted a priori estimates can be proved for the derivativies of solutions elliptic equations.
In \cite{DurSanTos} the previous results of \cite{ChaWhe} (also \cite{az, af, as}) for powers of the Laplacian operator with homogeneous Dirichlet boundary conditions were extended to weighted Sobolev spaces, i.e., it is proved that
\begin{equation*}
 \|u \|_{W^{2m}_{p}(\Omega,w)}\leq C \|f \|_{L_{p}(\Omega,w)},
\end{equation*}
for $\omega\in A_{p}$, where the constant $C$ depends on $\Omega$, $m$, $n$ and $w$.

In \cite{GGG_EJQTDE2017}, Guliyev, Gadjiev and Galandarova study the boundedness of the sublinear operators generated by Calderon-Zygmund operators in local generalized Morrey spaces. By using these results they prove the solvability of the Dirichlet boundary value problem for a polyharmonic equation in modified local generalized Sobolev-Morrey spaces and obtain a priori estimates for the solutions of the Dirichlet boundary value problems for the uniformly elliptic equations in modified local generalized Sobolev-Morrey spaces defined on bounded smooth domains.

Main purpose of this paper is to generalize Calderon-Zygmund type estimates of weak solution in generalized weighted Sobolev-Morrey spaces.
These estimates play an important role in regularity theory with H\"{o}lder estimates, studies have examined for classical $L_p$ estimates
or their generalizations. We apply these estimates to study the regularity of the solution of Dirichlet problem for linear elliptic partial differential equations (see \cite{ErOmMurProcIMM2017, GulOm2015, GulMurOmSoft2016,GulOmSoftProcIMM}). The presented results are generalization of previous works
\cite{DallSw2004, DurSanTos, vx, GGG_EJQTDE2019,GulAhmOmSoft2018}.

The paper is organized as follows. In Section 2, we give some definitions and auxiliary results. We also give some estimates of Green function and Poisson kernel.
In Section 3,  we study the boundedness of the maximal singular integral operators in generalized weighted Morrey spaces. In Section 4,  we
give regularity estimates and solvability of higher-order elliptic equations in generalized weighted Sobolev-Morrey spaces ${W^{m}M_{p,\varphi}(\Omega,w)}$. In Section 5, we prove the solvability of a uniformly elliptic boundary value problem in ${W^{m}M_{p,\varphi}(\Omega,w)}$.

We denote by $c$, $C, c_1, c_2$ etc., the various absolute  positive constants, which may have different values even in the same line.

\section{Preliminaries}

$~~~$ Let $\Rn$ be $n$-dimensional Euclidean space, $\Omega \subset \Rn$
be an open set. We denote by $\chi_E$ the characteristic function of a set $E\subseteq \Rn$ and $B(x,r)=\{y \in \Rn :|x-y| < r\}$.

Consider the homogeneous problem
\begin{equation}\label{eq-2.1}
\left\{
\begin{array}{l}
(-\Delta)^{m}u = f \,\,\,\mbox{in} \,\,\,\, \Omega,
\\
\left(\frac{\partial}{\partial \upsilon}\right)^{j}u = 0 \,\,\, \mbox{in}\,\,\,\, \partial\Omega, \,\,\, 0\leq j\leq m-1,
\end{array}\right.
\end{equation}
where $\frac{\partial}{\partial \upsilon}$ is the normal derivative, in a bounded domain $\Omega$ with smooth boundary $\partial\Omega$. The solution of \eqref{eq-2.1}
is given by
\begin{equation}\label{eq-2.2}
u(x)=\int\limits_{\Omega}G_{m}(x,y)f(y)dy,
\end{equation}
where $G_{m}(x,y)$ is Green function of the operator which can be written as
\begin{equation}\label{eq-2.3}
G_{m}(x,y)=\Gamma(x-y)+h(x,y)
\end{equation}
in $\Omega$, here $\Gamma(x-y)$ is a fundamental solution and $h(x,y)$ satisfies
\begin{equation*}
(-\Delta_{x})^{m}h(x,y) = 0, x\in \Omega,
$$$$
\left(\frac{\partial}{\partial \nu}\right)^{j}h(x,y)=-\left(\frac{\partial}{\partial \nu}\right)^{j}\Gamma(x-y), \, x\in\partial\Omega, ~ 0\leq j\leq m-1
\end{equation*}
for each fixed $y\in \Omega$. Then
\begin{equation*}
h(x,y)=-\sum\limits^{m-1}_{j=0}\int\limits_{\partial\Omega}
K_{j}(y,P)\left(\frac{\partial}{\partial \nu}\right)^{j}\Gamma(P-x)ds,
\end{equation*}
where $K_{j}(y,P)$ are Poisson kernels, $ds$ denotes the surface measure on $\partial\Omega$. We have the following known estimates of Green function $G_{m}(x,y)$  and Poisson kernels $K_{j}(x,y)$:
\begin{equation}\label{eq-2.4}
|D^{\alpha}_{x}G_{m}(x,y)|\leq C, \,\,\, \mbox{for} \,\,\, |\alpha|<2m-n,
\end{equation}
\begin{equation}\label{eq-2.5}
|D^{\alpha}G_{m}(x,y)|\leq C \, \log\left(\frac{2d}{|x-y|}\right), \,\,\, \mbox{for} \,\,\, |\alpha|=2m-n,
\end{equation}
\begin{equation}\label{eq-2.6}
|D^{\alpha}G_{m}(x,y)|\leq C \, |x-y|^{2m-n-|\alpha|}, \,\,\, \mbox{for} \,\,\, |\alpha|>2m-n,
\end{equation}
\begin{equation}\label{eq-2.7}
|D^{\alpha}G_{m}(x,y)|\leq \frac{C}{|x-y|^{n}} \,  \min\left\{1,\frac{d(y)}{|x-y|}\right\}^{m}, \,\,\, \mbox{for} \,\,\, |\alpha|=2m,
\end{equation}
\begin{equation}\label{eq-2.8}
|K_{j}(x,y)|\leq \frac{C \, d(x)}{|x-y|^{n-j+m-1}}, \,\,\, \mbox{for} \,\,\, 0\leq j\leq m-1,
\end{equation}
where $d(x)=dist (x,\partial\Omega)$ and $d=diam(\Omega)$ (see \cite{DurSanTos,vx}).

The Hardy-Littlewood maximal operator $M$ is defined by
$$
Mf(x)=\sup\limits_{t>0}|B(x,t)|^{-1}\int_{B(x,t)}|f(y)|dy.
$$

Now we give some known pointwise estimates.
\begin{lemma}\label{lemm-2.1} $\cite{GGG_EJQTDE2017}$ Let $u(x)$ be the solution of the problem \eqref{eq-2.1} and $|\alpha|\leq 2m-n$. Then there exists a constant $C$ depending on $n,m$ and $\Omega$ for all $x\in \Omega$ such that
\begin{equation*}
|D^{\alpha}u(x)|\leq C \, Mf(x).
\end{equation*}
\end{lemma}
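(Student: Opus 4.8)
The plan is to prove the pointwise bound $|D^\alpha u(x)| \le C\, Mf(x)$ by differentiating the integral representation \eqref{eq-2.2} under the integral sign and then estimating the resulting kernel $D^\alpha_x G_m(x,y)$ using the Green function bounds \eqref{eq-2.4}--\eqref{eq-2.6}. Since $|\alpha| \le 2m-n$, the relevant bounds are \eqref{eq-2.4} for $|\alpha| < 2m-n$ (where $|D^\alpha_x G_m(x,y)| \le C$) and \eqref{eq-2.5} for $|\alpha| = 2m-n$ (where $|D^\alpha_x G_m(x,y)| \le C\log(2d/|x-y|)$); in both cases the kernel is integrable against $|f|$ over the bounded set $\Omega$. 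So first I would write
\begin{equation*}
D^\alpha u(x) = \int_\Omega D^\alpha_x G_m(x,y)\, f(y)\, dy,
\end{equation*}
justifying the interchange of differentiation and integration by the local integrability of the kernel bounds (for $|\alpha| = 2m-n$ one uses that $\log(2d/|x-y|)$ is still in $L^1_{\rm loc}$ near the diagonal).

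Next I would estimate $|D^\alpha u(x)| \le C \int_\Omega \Phi(|x-y|)\, |f(y)|\, dy$, where $\Phi(r) = 1$ when $|\alpha| < 2m-n$ and $\Phi(r) = \log(2d/r)$ when $|\alpha| = 2m-n$. The standard device is to decompose $\Omega$ into dyadic annuli $A_k = \{y \in \Omega : 2^{-k-1}d < |x-y| \le 2^{-k}d\}$ for $k \ge 0$, so that
\begin{equation*}
\int_\Omega \Phi(|x-y|)\,|f(y)|\, dy \le \sum_{k\ge 0} \sup_{A_k}\Phi(|x-y|) \int_{B(x,2^{-k}d)} |f(y)|\, dy.
\end{equation*}
On $A_k$ one has $\Phi(|x-y|) \le C(1+k)$ and $\int_{B(x,2^{-k}d)}|f| \le |B(x,2^{-k}d)|\, Mf(x) = c_n (2^{-k}d)^n Mf(x)$. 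Hence the sum is bounded by $C\, Mf(x)\, d^n \sum_{k\ge 0}(1+k)\, 2^{-kn}$, and since $n \ge 1$ this geometric-type series converges to a constant depending only on $n$ and $d = \mathrm{diam}(\Omega)$. This yields $|D^\alpha u(x)| \le C(n,m,\Omega)\, Mf(x)$, which is exactly the claim.

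I do not expect any serious obstacle here; the only delicate points are bookkeeping ones. One is making sure the differentiation under the integral sign is legitimate up to and including the borderline exponent $|\alpha| = 2m-n$, which follows because the logarithmic singularity is integrable. Another is handling the case $2m - n \le 0$ (so that there are no admissible $\alpha$, or only $\alpha = 0$ when $2m = n$), but this is vacuous or covered by the same argument. The main point worth stating carefully is that the constant $C$ depends on $\Omega$ only through $d = \mathrm{diam}(\Omega)$ and on the constants in \eqref{eq-2.4}--\eqref{eq-2.5}, so the estimate is uniform in $x \in \Omega$ as asserted. Everything else is a routine annular decomposition controlled by the Hardy--Littlewood maximal function.
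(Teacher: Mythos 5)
Your argument is correct, and it is essentially the standard proof: the paper itself gives no proof of this lemma (it is quoted from \cite{GGG_EJQTDE2017}), and the argument used there and in \cite{DurSanTos} is precisely yours, namely differentiating the representation \eqref{eq-2.2} and combining the Green function bounds \eqref{eq-2.4}--\eqref{eq-2.5} with a dyadic annular decomposition of $\Omega\subset B(x,d)$ controlled by $Mf(x)$. The only points to state explicitly in a final write-up are the justification of differentiation under the integral sign at the borderline order $|\alpha|=2m-n$ (via the local integrability of the logarithmic bound) and the convergence of $\sum_{k\ge 0}(1+k)2^{-kn}$, both of which you already flag.
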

\begin{lemma}\label{lemm-2.2}  $\cite{DurSanTos}$
Let $f$, $g$ be measurable functions on $\Omega$, $|\alpha|=2m$ and $D=\left\{(x,y)\in \Omega\times \Omega : |x-y|>d(x)\right\}$. Then there exists a constant $C$ depending on $n,m$ and $\Omega$ such that
\begin{equation*}
\int_{D}|D^{\alpha}G_{m}(x,y)f(y)g(x)|dxdy\leq C \,  \left(\int_{D}Mf(y)|g(x)|dx+\int_{D}Mg(y)|f(y)|dy\right).
\end{equation*}
\end{lemma}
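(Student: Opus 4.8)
The plan is to split the domain $D$ into two pieces according to whether $|x-y|$ is comparable to $d(x)$ or much larger, and to exploit the decay estimate \eqref{eq-2.7} together with a dyadic decomposition. Concretely, for fixed $x$ write $D_x = \{y : |x-y| > d(x)\}$ and decompose it into annuli $A_k = \{y : 2^k d(x) < |x-y| \le 2^{k+1} d(x)\}$, $k \ge 0$. On $A_k$ the estimate \eqref{eq-2.7} gives
\begin{equation*}
|D^{\alpha}G_{m}(x,y)| \le \frac{C}{|x-y|^{n}}\left(\frac{d(x)}{|x-y|}\right)^{m} \asymp \frac{C}{(2^k d(x))^{n}}\, 2^{-km},
\end{equation*}
wait — here one must be careful: \eqref{eq-2.7} has $d(y)$, not $d(x)$, in the minimum, and on $D$ one only controls $|x-y| > d(x)$. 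The first step is therefore to observe that by symmetry of the roles (or by splitting $D$ further into the part where $d(y) \le |x-y|$ and the part where $d(y) > |x-y| \ge d(x)$) one may assume the minimum in \eqref{eq-2.7} equals $d(y)/|x-y|$ on the relevant region, and on the complementary region $d(y) \asymp |x-y|$ so $|D^\alpha G_m| \lesssim |x-y|^{-n}$ with an extra integrability coming from the constraint near $\partial\Omega$.

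Next, integrating the bound over $A_k$ (whose measure is $\asymp (2^k d(x))^n$) and summing the geometric series in $k$ yields, for the $g$-side,
\begin{equation*}
\int_{D_x} |D^{\alpha}G_m(x,y)|\,|g(x)|\,dy \le C\, |g(x)| \cdot \sup_{r > d(x)} \frac{1}{r^{n}}\int_{B(x,r)} (\text{relevant factor})\,dy,
\end{equation*}
and the point is to absorb the remaining $y$-dependence ($|f(y)|$ in the mixed integral, after Fubini) into an average over a ball centered appropriately, which is exactly $Mf$. The symmetric term producing $Mg$ comes from handling the piece of $D$ where the $x$-integration is the inner one, using that $|x-y| > d(x)$ lets us bound $d(y)/|x-y| \le d(y)/d(x)$ and re-center the maximal average; alternatively one interchanges the roles of the two "halves" $D \cap \{d(y) \le |x-y|\}$ and its complement. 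So the structure is: (i) Fubini to set up an iterated integral; (ii) dyadic annular decomposition in the inner variable; (iii) apply \eqref{eq-2.7}; (iv) recognize each annular average as bounded by a Hardy--Littlewood maximal value; (v) sum the geometric series. The two terms on the right-hand side correspond to the two natural ways of choosing which variable is integrated first.

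The main obstacle I expect is the bookkeeping around which distance, $d(x)$ or $d(y)$, appears and how the constraint $|x-y| > d(x)$ interacts with the $d(y)$ in \eqref{eq-2.7}: one needs the elementary geometric fact that on $D$ either $d(y) \lesssim |x-y|$ (so the $m$-th power in \eqref{eq-2.7} gives genuine decay) or $d(x) \le |x-y| < d(y)$, and in the latter case $|x-y| + d(x) \ge$ some fixed fraction of $d(y)$ forces $|x-y| \asymp d(y)$, collapsing \eqref{eq-2.7} to $|x-y|^{-n}$ but on a set whose size is controlled by the distance to the boundary. A secondary subtlety is making sure the ball over which the maximal function is evaluated is centered at the correct point ($x$ in one term, $y$ in the other) — this is what dictates that the final bound is a \emph{sum} $Mf(y)|g(x)| + Mg(y)|f(y)|$ rather than a single product, and it is why the estimate is stated with an integral on each side rather than pointwise.
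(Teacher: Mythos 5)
The paper itself gives no proof of Lemma \ref{lemm-2.2}; it is quoted from \cite{DurSanTos}. Measured against the standard argument there, your plan has the right ingredients (estimate \eqref{eq-2.7}, the triangle-inequality fact $d(y)\le d(x)+|x-y|$, dyadic annuli, recognition of maximal averages), but the step you make primary contains a genuine gap. On the annulus $A_k=\{y:2^k d(x)<|x-y|\le 2^{k+1}d(x)\}$ centered at $x$, the claimed bound $|D^{\alpha}G_m(x,y)|\asymp (2^k d(x))^{-n}2^{-km}$ is false: the decay factor in \eqref{eq-2.7} is $\bigl(d(y)/|x-y|\bigr)^m$, and on $A_k$ one only has $d(y)\le d(x)+|x-y|\le (1+2^{k+1})d(x)$, so $d(y)/|x-y|$ can be of order $1$ uniformly in $k$ (take $y$ deep inside $\Omega$). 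The geometric series you propose to sum therefore does not converge; what the $x$-centered decomposition actually yields is
\begin{equation*}
\int_{\{y:|x-y|>d(x)\}}|D^{\alpha}G_m(x,y)||f(y)|\,dy\;\le\;C\,\log\!\Bigl(\tfrac{2d}{d(x)}\Bigr)\,Mf(x),
\end{equation*}
with a logarithm that blows up as $x\to\partial\Omega$. So the term $\int Mf(y)|g(x)|\,dx$ cannot be produced this way from \eqref{eq-2.7} alone, and your proposed fix (splitting according to which branch of the minimum is active) does not remove the obstruction, since the failure occurs precisely on the branch where the minimum equals $d(y)/|x-y|$.

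The repair is to promote to the main argument what you relegate to the ``symmetric term.'' Since $\min\{1,t\}\le t$ for all $t>0$, estimate \eqref{eq-2.7} gives $|D^{\alpha}G_m(x,y)|\le C\,d(y)^m|x-y|^{-n-m}$ on all of $D$, and on $D$ one also has $d(y)\le d(x)+|x-y|<2|x-y|$, i.e. $|x-y|>d(y)/2$. Now apply Fubini with $y$ fixed and decompose the $x$-region into annuli $\{x:2^{k-1}d(y)<|x-y|\le 2^{k}d(y)\}$ centered at $y$: there the kernel is at most $C2^{-km}(2^kd(y))^{-n}$, each annulus contributes at most $C2^{-km}Mg(y)$, and the series converges because $m\ge 1$. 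This yields
\begin{equation*}
\int_{D}|D^{\alpha}G_{m}(x,y)f(y)g(x)|\,dx\,dy\;\le\;C\int_{\Omega}Mg(y)\,|f(y)|\,dy,
\end{equation*}
which already implies the stated inequality (the $Mf$ term on the right-hand side is simply not needed with the one-sided estimate \eqref{eq-2.7}). Your treatment of the region $d(y)>|x-y|$, where $|x-y|\asymp d(y)$ and the $x$-section sits in $B(y,d(y))$, is correct and is subsumed by the same $y$-centered computation.
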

In order to see how to estimate $D^{\alpha}_{x}h(x,y)$ in $\Omega\backslash D$, we consider separately the functions $h(x,y)$ and $\Gamma(x,y)$ involved in $G_{m}(x,y)$.
\begin{lemma}\label{lemm-2.3}   $\cite{DurSanTos}$
If $|\alpha|>2m-n+1$, then there exists a constant $C$ such that
\begin{equation}\label{eq-2.9}
|D^{\alpha}_{x}h(x,y)|\leq C \, d^{2m-n-|\alpha|}(x) ~~~ \mbox{for} ~~ |x-y|\leq d(x).
\end{equation}
\end{lemma}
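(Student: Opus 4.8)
The plan is to exploit the boundary representation of the regular part,
\[
h(x,y)=-\sum_{j=0}^{m-1}\int_{\partial\Omega}K_{j}(y,P)\Bigl(\frac{\partial}{\partial\nu}\Bigr)^{j}\Gamma(P-x)\,ds(P),
\]
which exhibits $h(\cdot,y)$ as a superposition of the polyharmonic functions $\Gamma(P-\,\cdot\,)$, $P\in\partial\Omega$. Since $|P-x|\ge d(x)>0$ for every $P\in\partial\Omega$, the integrand together with all its $x$-derivatives is, for $x$ ranging in a fixed ball $B(x_{0},d(x_{0})/2)\subset\Omega$, dominated by a constant multiple of the fixed $L^{1}(ds)$-function $P\mapsto d(y)\,|y-P|^{-(n-j+m-1)}$ (via \eqref{eq-2.8}); hence dominated convergence legitimizes differentiation under the integral,
\[
D^{\alpha}_{x}h(x,y)=-\sum_{j=0}^{m-1}\int_{\partial\Omega}K_{j}(y,P)\Bigl(\frac{\partial}{\partial\nu}\Bigr)^{j}D^{\alpha}_{x}\Gamma(P-x)\,ds(P).
\]
For the fundamental solution one invokes the classical bound $|D^{\beta}\Gamma(z)|\le C|z|^{2m-n-|\beta|}$, valid for $|\beta|>2m-n$; applied with $|\beta|=|\alpha|+j$ this is available for every $0\le j\le m-1$ precisely because $|\alpha|>2m-n+1$, which moreover provides room in the geometric sums below. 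Together with \eqref{eq-2.8} this reduces \eqref{eq-2.9} to showing, uniformly for $|x-y|\le d(x)$ and each $0\le j\le m-1$,
\[
d(y)\int_{\partial\Omega}\frac{|P-x|^{\,2m-n-|\alpha|-j}}{|y-P|^{\,n-j+m-1}}\,ds(P)\ \le\ C\,d(x)^{\,2m-n-|\alpha|};
\]
note that the hypothesis forces $d(y)\le d(x)+|x-y|\le 2d(x)$.

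Fix such $x,y$ and split $\partial\Omega=\Sigma'\cup\Sigma''$ with $\Sigma'=\partial\Omega\cap B(x,2d(x))$ and $\Sigma''=\partial\Omega\setminus B(x,2d(x))$. On $\Sigma''$ one has $|P-x|\ge 2d(x)\ge 2|x-y|$, whence $|P-y|\ge|P-x|-|x-y|\ge\tfrac12|P-x|$, so both distances are comparable; decomposing $\Sigma''$ into the finitely many dyadic shells $\{2^{k}d(x)\le|P-x|<2^{k+1}d(x)\}$, using the surface bound $\mathrm{meas}_{n-1}(\partial\Omega\cap B(z,r))\le Cr^{n-1}$, and summing the ensuing geometric series — convergent exactly because $|\alpha|$ lies above the critical exponent — bounds the $\Sigma''$-part as desired. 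On $\Sigma'$ one has $d(x)\le|P-x|\le 2d(x)$, so the $\Gamma$-factor (whose exponent is negative) is $\le C\,d(x)^{2m-n-|\alpha|-j}$ and factors out, leaving $\int_{\Sigma'}|K_{j}(y,P)|\,ds(P)$, which is handled by \eqref{eq-2.8} and the elementary estimate $\int_{\partial\Omega}|y-P|^{-\kappa}\,ds(P)\le C\,d(y)^{\,n-1-\kappa}$ ($\kappa>n-1$), obtained by flattening $\partial\Omega$ near the foot of the perpendicular from $y$.

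The main obstacle is the near-boundary term over $\Sigma'$. When $|x-y|$ approaches $d(x)$, the point $y$ may lie arbitrarily close to $\partial\Omega$, so $|y-P|$ is small for $P$ near the foot of the perpendicular from $y$ and the Poisson kernel $K_{j}(y,\cdot)$ is genuinely singular there; what makes the argument close is the precise power of $d(y)$ carried by the Poisson-kernel bound \eqref{eq-2.8}, weighed against the gain $d(x)^{2m-n-|\alpha|-j}$ from the heavily differentiated fundamental solution and the comparabilities between $d(x)$, $d(y)$ and $|x-y|$. A transparent check is the half-space $\{x_{n}>0\}$ with $m=1$: there $h(x,y)=-\Gamma(x-\bar y)$ with $\bar y$ the reflection of $y$, so $|x-\bar y|\ge d(x)$ gives \eqref{eq-2.9} directly, and the single Poisson kernel satisfies $\int_{\partial\Omega}|K_{0}(y,P)|\,ds(P)=1$ — precisely the feature that tames the near-boundary integral. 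Finally, it is enough to carry out the above argument for the least integer exceeding $2m-n+1$: the estimate then propagates to every larger $|\alpha|$ via interior estimates for the polyharmonic function $D^{\beta}_{x}h(\cdot,y)$ on the ball $B(x,d(x)/2)\subset\Omega$, on which $d(\cdot)\simeq d(x)$, after harmlessly widening the range $|x-y|\le d(x)$ to $|x-y|\le Cd(x)$ by a chaining argument.
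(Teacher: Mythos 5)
The paper itself offers no proof of this lemma (it is quoted from \cite{DurSanTos}), so your argument has to stand on its own. The strategy --- differentiate the boundary representation of $h$, bound $D^{\alpha}_{x}(\partial/\partial\nu)^{j}\Gamma(P-x)$ by $C|P-x|^{2m-n-|\alpha|-j}$, and control the Poisson kernels --- is the right one, and the justification for differentiating under the integral is fine. But the inequality you reduce everything to,
\[
d(y)\int_{\partial\Omega}\frac{|P-x|^{\,2m-n-|\alpha|-j}}{|y-P|^{\,n-j+m-1}}\,ds(P)\ \le\ C\,d(x)^{\,2m-n-|\alpha|},
\]
is simply \emph{false} for $m\ge 2$, so the step you yourself flag as ``the main obstacle'' does not close. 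Take $y$ on the inward normal segment through $x$, with $d(y)=\varepsilon\, d(x)$ and $|x-y|=(1-\varepsilon)d(x)\le d(x)$. Restricting the integral to $P\in\partial\Omega$ with $|P-y|\le 2d(y)$ (surface measure $\gtrsim d(y)^{n-1}$, and there $|P-x|\le 5d(x)$, so the first factor, having negative exponent, is $\gtrsim d(x)^{2m-n-|\alpha|-j}$), the $j=0$ term of the left-hand side is bounded below by $c\,d(y)^{1-m}d(x)^{2m-n-|\alpha|}$, which blows up as $\varepsilon\to 0$ once $m\ge 2$. The same loss occurs in your ``as desired'' claim for $\Sigma''$: the dyadic sum there yields $d(y)\,d(x)^{m-n-|\alpha|}\le 2\,d(x)^{m+1-n-|\alpha|}=2\,d(x)^{2m-n-|\alpha|}\,d(x)^{1-m}$, again off by $d(x)^{1-m}$. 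Your only sanity check is the half-space with $m=1$, which is precisely the one case where the exponent $1-m$ vanishes, so it could not detect the problem.

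The root cause is that \eqref{eq-2.8} as printed in this paper is a weakened form of the Dall'Acqua--Sweers estimate: the sharp bound is $|K_{j}(x,y)|\le C\,d(x)^{m}/|x-y|^{n-j+m-1}$ (note $d(x)^{m}$, not $d(x)$; only this makes $\int_{\partial\Omega}|K_{j}(y,P)|\,ds(P)\le C\,d(y)^{j}$ bounded, consistent with $\int_{\partial\Omega}K_{0}(y,P)\,ds(P)=1$ and with your own half-space observation). With that bound your computation closes exactly: the near part gives $d(y)^{m}\,d(x)^{2m-n-|\alpha|-j}\,d(y)^{j-m}=d(y)^{j}d(x)^{2m-n-|\alpha|-j}\le C\,d(x)^{2m-n-|\alpha|}$ and the far part gives $d(y)^{m}\,d(x)^{m-n-|\alpha|}\le C\,d(x)^{2m-n-|\alpha|}$, using $d(y)\le 2d(x)$ and $0\le j\le m-1$. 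So the proof is salvageable, but only by importing the correct Poisson-kernel estimate from \cite{DallSw2004} in place of \eqref{eq-2.8}; as written, your argument establishes the lemma only for $m=1$. (The closing reduction to the smallest admissible $|\alpha|$ via interior estimates for the polyharmonic function $h(\cdot,y)$ is sound but does not affect this issue.)
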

Let $K$ be a Calderon-Zygmund singular integral operator, briefly a Calderon-Zygmund operator is a linear operator bounded from $L_{2}(\Rn)$  to $L_{2}(\Rn)$ taking all infinitely continuously differentiable functions $f$ with compact support to functions in $L_{1}^{\rm loc}(\Rn)$, represented for such functions by
$$
Kf(x)=\int_{\Rn}K(x,y)f(y)dy,
$$
here $K(x,y)$ is a continuous function which satisfies the standard estimates.

It follows from the previous lemmas that for each $x\in \Omega$ and $|\alpha|>2m-n+1$ we have $D^{\alpha}_{x}h(x,y)$ is bounded uniformly in a neighborhood of $x$ and so
\begin{equation}\label{eq-2.10}
D^{\alpha}_{x}\int_{\Omega}h(x,y)f(y)dy=\int_{\Omega}D^{\alpha}_{x}h(x,y)f(y)dy.
\end{equation}
On the other hand, although $D^{\alpha}_{x}\Gamma(x,y)$ is a singular kernel for $|\alpha|=2m$, taking $\beta$ such that $|\beta|=2m-1$, we have that
\begin{equation}\label{eq-2.11}
D^{\alpha}_{x}\int_{\Omega}D^{\beta}_{x}\Gamma(x-y)f(y)dy=Kf(x)+a(x)f(x),
\end{equation}
where $a(x)$ is a bounded function and $K$ is a Calderon-Zygmund operator given by
\begin{equation*}
  Kf(x)=\lim\limits_{\varepsilon\rightarrow 0}K_{\varepsilon}f(x) \,\,\,\mbox{with} \,\,\, K_{\varepsilon}f(x)=\int_{\Rn \setminus B(x,\varepsilon)} D^{\alpha}_{x}\Gamma(x-y)f(y)dy.
\end{equation*}
We will also make use of the maximal singular operator $K^{\ast}f(x)=\sup\limits_{\varepsilon>0}|K_{\varepsilon}f(x)|$.
Here and in what follows we consider $f$ defined in $\Rn$ extending the original $f$ by zero.

\begin{lemma}\label{lemm-2.4}   $\cite{DurSanTos}$
Let $g(x)$ be a measurable function on $\Omega$ and $|\alpha|=2m$. Then there exists a constant $C$ depending  only on $n,m$ and $\Omega$ such that
\begin{align*}
\int_{\Omega}|D^{\alpha}u(x) \, g(x)|dx & \leq C \, \Big(\int_{\Omega} K^{\ast}f(x) \, |g(x)|dx + \int_{\Omega} Mf(x) \, |g(x)|dx
\\
& + \int_{\Omega} Mg(x) \, |f(x)| dx + \int_{\Omega} |f(x)| \, |g(x)|dx \Big).
\end{align*}
\end{lemma}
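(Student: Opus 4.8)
The plan is to split $D^\alpha u$ using the representation $G_m=\Gamma+h$ of \eqref{eq-2.3} together with the identities \eqref{eq-2.10} and \eqref{eq-2.11}, and then to handle the resulting $h$-integral over the two regions $\{y:|x-y|\le d(x)\}$ and $\{y:|x-y|>d(x)\}$ separately. For a.e.\ $x\in\Omega$ (those with $d(x)>0$) I would write
\begin{align*}
D^\alpha u(x) &= \int_{\{y\in\Omega:\,|x-y|\le d(x)\}}\! D^\alpha_x h(x,y)f(y)\,dy
 + \int_{\{y\in\Omega:\,|x-y|>d(x)\}}\! D^\alpha_x h(x,y)f(y)\,dy \\
&\quad + Kf(x) + a(x)f(x),
\end{align*}
which is legitimate by \eqref{eq-2.10} (applicable since $|\alpha|=2m>2m-n+1$ for $n\ge2$) and by \eqref{eq-2.11} after peeling off $|\beta|=2m-1$ derivatives from $\Gamma$. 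Two of the four summands are controlled immediately: $|Kf(x)|=|\lim_{\varepsilon\to0}K_\varepsilon f(x)|\le K^\ast f(x)$, and $|a(x)f(x)|\le C|f(x)|$ because $a$ is bounded; after multiplying by $|g(x)|$ and integrating over $\Omega$ these give the first and the fourth term of the asserted estimate.

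For the near region, Lemma~\ref{lemm-2.3} with $|\alpha|=2m$ yields $|D^\alpha_x h(x,y)|\le C\,d(x)^{-n}$ whenever $|x-y|\le d(x)$, so the first integral above is dominated pointwise by $C\,d(x)^{-n}\int_{B(x,d(x))}|f(y)|\,dy\le C\,Mf(x)$, which produces the $\int_\Omega Mf\,|g|$ term. For the far region I would re-expand $D^\alpha_x h(x,y)=D^\alpha_x G_m(x,y)-D^\alpha_x\Gamma(x-y)$, valid pointwise since $x\ne y$ there. The $\Gamma$-part has to be kept grouped: because $f$ is extended by zero, $\int_{\{y\in\Omega:\,|x-y|>d(x)\}}D^\alpha_x\Gamma(x-y)f(y)\,dy=K_{d(x)}f(x)$, whose modulus is $\le K^\ast f(x)$ and so again feeds the first term. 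For the $G_m$-part, taking absolute values inside the integral, multiplying by $|g(x)|$ and integrating in $x$ reproduces exactly $\int_D|D^\alpha G_m(x,y)f(y)g(x)|\,dx\,dy$, to which Lemma~\ref{lemm-2.2} applies and gives a constant times $\int_\Omega Mf\,|g|+\int_\Omega Mg\,|f|$. Adding the four contributions yields the claimed inequality.

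The one genuinely delicate point — the main obstacle — is the bookkeeping around differentiation under the integral sign and the placement of absolute values. One has to apply \eqref{eq-2.10}--\eqref{eq-2.11} with care, in particular splitting off the $2m-1$ ``harmless'' derivatives of $\Gamma$ before taking the last one so that what remains is a genuine Calder\'on--Zygmund kernel; and, most importantly, one must keep the truncated integral $\int_{|x-y|>d(x)}D^\alpha_x\Gamma(x-y)f(y)\,dy$ assembled as $K_{d(x)}f(x)$ and estimate \emph{that} by $K^\ast f$, since the integrand $|D^\alpha_x\Gamma(x-y)|\,|f(y)|$ is \emph{not} dominated by $K^\ast f(x)$. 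Lastly, the argument as organized uses $n\ge2$ so that Lemma~\ref{lemm-2.3} is available; the case $n=1$, if one wants it, is covered instead by the sharper bounds \eqref{eq-2.4}--\eqref{eq-2.6}.
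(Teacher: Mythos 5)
Your argument is correct and is essentially the proof the paper intends: the paper states Lemma~\ref{lemm-2.4} without proof, citing \cite{DurSanTos}, and your reconstruction — splitting $D^{\alpha}u$ via $G_m=\Gamma+h$ using \eqref{eq-2.10}--\eqref{eq-2.11}, bounding the near region $|x-y|\le d(x)$ by $Mf$ through Lemma~\ref{lemm-2.3}, rewriting the far region as $D^{\alpha}_xG_m-D^{\alpha}_x\Gamma$ so that the $\Gamma$-piece is the truncation $K_{d(x)}f\le K^{\ast}f$ and the $G_m$-piece is handled by Lemma~\ref{lemm-2.2} over $D$ — is exactly the argument of that reference. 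Your emphasis on keeping the truncated integral assembled rather than taking absolute values inside is the right key point; only the closing aside about $n=1$ is imprecise (the bound \eqref{eq-2.6} there still gives a $|x-y|^{-1}$ singularity), but that case is outside the scope of the statement as used here.
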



	Let $1\leq p<\infty$, $\varphi$ be a positive measurable function on $\Rn\times (0,\infty)$ and $w$ be nonnegative measurable function on $\Rn$. We denote by $M_{p,\varphi}(\Rn,w)$ the generalized weighted Morrey spaces, the space of all functions $f\in L_{p,w}^{\rm loc}(\Rn)$ with finite norm
	$$
	\|f\|_{M_{p,\varphi}(w)}=\sup\limits_{x\in \Rn,r>0}\varphi^{-1}(x,r) \,w (B(x,r))^{-\frac{1}{p}}\|f\|_{L_{p,w}(B(x,r))},
	$$
	where $L_{p,w}(B(x,r))$ denotes the weighted $L_{p}$-space of measurable functions $f$ for which
	$$
	\|f\|_{L_{p,w}(B(x,r))}\equiv \|f_{\chi_{B(x,r)}}\|_{L_{p,w}(\Rn)}=\left(\int_{B(x,r)}|f(y)|^{p} w(y)dy \right)^{\frac{1}{p}}.
	$$
	Furthermore, by $WM_{p,\varphi}(w) \equiv WM_{p,\varphi}(\Rn,w)$ we denote the weak generalized weighted Morrey space of all functions $f\in WL_{p,w}^{\rm loc}(\Rn)$ for which
	$$
	\|f\|_{WM_{p,\varphi}(w)}=\sup\limits_{x\in \Rn,r>0}\varphi^{-1}(x,r) \,w (B(x,r))^{-\frac{1}{p}}\|f\|_{WL_{p,w}(B(x,r))}<\infty,
	$$
	where $WL_{p,w}(B(x,r))$ denotes the weak $L_{p,w}$-space of measurable functions $f$ for which
	$$
	\|f\|_{WL_{p,w}(B(x,r))}\equiv \|f_{\chi_{B(x,r)}}\|_{WL_{p,w}(\Rn)}=\sup\limits_{t>0}\left(\int_{y\in B(x,r):|f(y)|>t} w(y)dy \right)^{\frac{1}{p}}.
	$$
\begin{remark}\label{rem-2.1}
	\begin{enumerate}
		\item If $w \equiv 1$, then $M_{p,\varphi}(1)=M_{p,\varphi}(\Rn)$ is the generalized Morrey space.
		\item If $\varphi(x,r)=w(B(x,r))^{\frac{k-1}{p}}$, then $M_{p,\varphi}(w)=L_{p,k}(w)$ is the weighted Morrey space.
		\item If $\varphi(x,r)=w_1(B(x,r))^{\frac{k}{p}} \, w(B(x,r))^{-\frac{1}{p}}$, then $M_{p,\varphi}(w)=L_{p,k}(w,w_1)$ is the two weighted Morrey space.
		\item If $w=1$ and $\varphi(x,r)=r^{\frac{\lambda-n}{p}}$ with $0<\lambda<n$, then $M_{p,\varphi}(w)=L_{p,\lambda}(\Rn)$ is the classical Morrey space and $WM_{p,\varphi}(w)=WL_{p,\lambda}(\Rn)$ is the weak Morrey space.
		\item If $\varphi(x,r)=w(B(x,r))^{-\frac{1}{p}}$, then $M_{p,\varphi}(w)=L_{p,w}(\Rn)$ is the weighted Lebesgue space.
	\end{enumerate}
\end{remark}

For any bounded domain $\Omega$ we define $M_{p,\varphi}(\Omega,w)$ taking $f \in L_{p,w} (\Omega)$ and $\Omega(x,r)$ instead of $B(x,r)$ in the norm above and $\Omega(x,r)=\Omega\cap B(x,r)$.
The generalized weighted Sobolev-Morrey spaces $W^{m}M_{p,\varphi}(\Omega,w)$ consist of all weighted Sobolev functions $u\in W^{m}_{p}(\Omega,w)$  with distributional derivatives $D^{s}u\in M_{p,\varphi}(\Omega,w)$, $0\leq |s|\leq m$, endowed with the norm
$$
\|u\|_{W^{m}M_{p,\varphi}(\Omega,w)}=\sum\limits_{0\leq |s|\leq m}\|D^{s}u\|_{M_{p,\varphi}(\Omega,w)}.
$$
The space $W^{m}M_{p,\varphi}(\Omega,w)\cap C^{\infty}_{0}(\Omega)=\overset{\circ}{W^{m}} M_{p,\varphi}(\Omega,w)$.

We recall the definition of $A_{p}(\Omega)$ class for $1\le p<\infty$.
A non-negative locally integrable function $w(x)$ belongs to $A_{p}(\Omega)$ if there exists a constant $C$ such that
\begin{equation*}
  \left(\frac{1}{|B|}\int_{B}w(x)dx\right) \left(\frac{1}{|B|}\int_{B}w^{-\frac{1}{p-1}}(x)dx\right)^{p-1}\leq C,
\end{equation*}
for all ball $B\subset \Omega$.

\begin{theorem}\label{theor-2.1} $\cite{GulEMJ2012}$
	Let $1\leq p<\infty$, $w\in A_{p}(\Rn)$ and the pair $(\varphi_{1},\varphi_{2})$ satisfy the condition
\begin{equation}\label{eq-2.13}
\int^{\infty}_{r}\frac{\es_{t<s<\infty}\varphi_{1}(x,s) w(B(x,s))^{\frac{1}{p}}}{w(B(x,s))^{\frac{1}{p}}}\frac{dt}{t}\leq C\varphi_{2}(x,r),
\end{equation}
where $C$ does not depend on $x$ and $r$.  Then the operators $M$ and $K$ are bounded from $M_{p,\varphi_{1}}(w)$ to $M_{p,\varphi_{2}}(w)$ for $p>1$ and from $M_{1,\varphi_{1}}(w)$ to $WM_{1,\varphi_{2}}(w)$.
\end{theorem}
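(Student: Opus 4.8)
We shall follow the now--standard two--step scheme for proving boundedness of (sub)linear operators on generalized weighted Morrey spaces. In the first step we shall establish, for each ball $B(x_0,r)$, a \emph{local estimate} bounding $\|Mf\|_{L_{p,w}(B(x_0,r))}$, and likewise $\|Kf\|_{L_{p,w}(B(x_0,r))}$ and $\|K^{\ast}f\|_{L_{p,w}(B(x_0,r))}$, by a weighted average of $\|f\|_{L_{p,w}(B(x_0,t))}$ over $t>r$; in the second step we shall plug this into the definition of the $M_{p,\varphi_2}(w)$--norm, where condition \eqref{eq-2.13} closes the argument. Throughout, the weight hypothesis enters via two classical facts: for $w\in A_p(\Rn)$, $p>1$, the operators $M$, $K$ and $K^{\ast}$ are bounded on $L_{p,w}(\Rn)$ (Muckenhoupt's weighted maximal inequality; Coifman--Fefferman together with Cotlar's inequality for $K$ and $K^{\ast}$), while for $p=1$, $w\in A_1(\Rn)$, they map $L_{1,w}(\Rn)$ into $WL_{1,w}(\Rn)$.

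For the local estimate, fix $B=B(x_0,r)$ and split $f=f_1+f_2$ with $f_1=f\chi_{B(x_0,2r)}$ and $f_2=f\chi_{\Rn\setminus B(x_0,2r)}$. The term $f_1$ is handled by the classical bounds: $\|Mf_1\|_{L_{p,w}(B)}\le\|Mf_1\|_{L_{p,w}(\Rn)}\le C\,\|f\|_{L_{p,w}(B(x_0,2r))}$, and similarly for $K$, $K^{\ast}$ (weak norms if $p=1$). For $f_2$ we shall use the pointwise bound, valid for all $y\in B$ and, in the case of $K^{\ast}$, uniformly in the truncation parameter,
$$
Mf_2(y)+K^{\ast}f_2(y)\le C\int_{\Rn\setminus B(x_0,2r)}\frac{|f(z)|}{|x_0-z|^{n}}\,dz\le C\int_{2r}^{\infty}\Big(\frac{1}{|B(x_0,t)|}\int_{B(x_0,t)}|f(z)|\,dz\Big)\frac{dt}{t},
$$
the last step being the usual dyadic decomposition of $\Rn\setminus B(x_0,2r)$. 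H\"older's inequality and the $A_p$ condition then give $\frac{1}{|B(x_0,t)|}\int_{B(x_0,t)}|f|\le C\,w(B(x_0,t))^{-1/p}\|f\|_{L_{p,w}(B(x_0,t))}$. Combining the two parts, and using that $\int_{2r}^{\infty}w(B(x_0,t))^{-1/p}\,\tfrac{dt}{t}\approx w(B(x_0,r))^{-1/p}$ — here the doubling and the reverse--doubling of $w\in A_p$ are used to absorb the local term — we shall arrive at
$$
\|Mf\|_{L_{p,w}(B(x_0,r))}+\|K^{\ast}f\|_{L_{p,w}(B(x_0,r))}\le C\,w(B(x_0,r))^{\frac1p}\int_{r}^{\infty}\|f\|_{L_{p,w}(B(x_0,t))}\,w(B(x_0,t))^{-\frac1p}\,\frac{dt}{t},
$$
with the corresponding weak--$L_{p,w}$ version when $p=1$; since $|Kf|\le K^{\ast}f$ a.e., the same bound holds for $K$.

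It then remains to pass to the Morrey scale. Dividing the local estimate by $\varphi_2(x_0,r)\,w(B(x_0,r))^{1/p}$ and taking the supremum over $x_0\in\Rn$, $r>0$, we must bound $\sup_{x_0,r}\varphi_2(x_0,r)^{-1}\int_{r}^{\infty}\|f\|_{L_{p,w}(B(x_0,t))}\,w(B(x_0,t))^{-1/p}\,\tfrac{dt}{t}$. Since $t\mapsto\|f\|_{L_{p,w}(B(x_0,t))}$ is non-decreasing, for every $t>0$ and every $s>t$ we have $\|f\|_{L_{p,w}(B(x_0,t))}\le\|f\|_{L_{p,w}(B(x_0,s))}\le\varphi_1(x_0,s)\,w(B(x_0,s))^{1/p}\,\|f\|_{M_{p,\varphi_1}(w)}$, hence
$$
\|f\|_{L_{p,w}(B(x_0,t))}\le\Big(\es_{t<s<\infty}\varphi_1(x_0,s)\,w(B(x_0,s))^{\frac1p}\Big)\,\|f\|_{M_{p,\varphi_1}(w)}.
$$
Inserting this and invoking \eqref{eq-2.13}, the inner integral is $\le C\,\varphi_2(x_0,r)\,\|f\|_{M_{p,\varphi_1}(w)}$, so that $\|Mf\|_{M_{p,\varphi_2}(w)}\le C\,\|f\|_{M_{p,\varphi_1}(w)}$ and, in the same way, $\|Kf\|_{M_{p,\varphi_2}(w)}\le C\,\|f\|_{M_{p,\varphi_1}(w)}$ for $p>1$; repeating the computation with weak norms yields $\|Mf\|_{WM_{1,\varphi_2}(w)}+\|Kf\|_{WM_{1,\varphi_2}(w)}\le C\,\|f\|_{M_{1,\varphi_1}(w)}$ when $p=1$.

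The main obstacle is the local estimate of the first step; everything after it is formal. Two points need care there: first, the tail bounds for $Mf_2$ and $K^{\ast}f_2$ must be genuinely uniform in $y\in B$ (and, for $K^{\ast}$, in the truncation parameter) so that they can be pulled out of the $L_{p,w}(B)$--norm as an $L^{\infty}$ factor; second, the conversion of $L^1$--averages into weighted $L^p$--averages, and the absorption of the term $\|f\|_{L_{p,w}(B(x_0,2r))}$, must use the $A_p$ condition in its precise quantitative form — H\"older's inequality for the former and the doubling/reverse--doubling properties of $A_p$ weights for the latter. For $K$ one also needs to replace $Kf$ by $K^{\ast}f$ when deriving the pointwise tail bound, which is why $K^{\ast}$ is carried along throughout.
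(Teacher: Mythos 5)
Your proposal is correct and follows essentially the same two-step scheme the paper itself relies on: Theorem \ref{theor-2.1} is quoted from \cite{GulEMJ2012} without proof, but Section 3 proves the exactly analogous statement for $K^{\ast}$ (Lemma \ref{MarOk1} and Theorem \ref{Kuzuf1}) via the same decomposition $f=f_1+f_2$, the $L_{p,w}$-boundedness for $w\in A_p$ on the local piece, Fubini plus H\"older/$A_p$ on the tail, and then the weighted Hardy inequality (Theorem \ref{Hardy}) --- whose content is precisely your monotonicity/ess-inf argument --- to pass to the Morrey scale. The only cosmetic differences are that you inline the Hardy step rather than invoking it as a black-box lemma, and you absorb the local term $\|f\|_{L_{p,w}(B(x_0,2r))}$ via reverse doubling of $t\mapsto w(B(x_0,t))$ instead of the paper's manipulation through $|B|\int_{2r}^{\infty}\|f\|_{L_{p,w}(B(x_0,t))}\,dt/t^{n+1}$; the two are equivalent.
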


\section{Boundedness of the maximal singular operators in $M_{p,\varphi}(\Omega,w)$}

In this section we prove the boundedness of the maximal singular operators $K^{\ast}$ in generalized weighted Morrey spaces $M_{p,\varphi}(\Omega,w)$.
We are going to use the following statement on the boundedness of the weighted Hardy operator
$$
H^*_{w} g(r):= \int_r^d\, g(t) w(t)\, dt, \qquad 0<r<d\,.
$$
where $w$ is a fixed function non-negativeand measurable on $(0,d)$.

The following theorem was proved in \cite{GulAJM2013}.
\begin{theorem} \label{Hardy} 
	Suppose that $v_1, v_2,$ and $w$ are weights on $(0,d)$.  Then the inequality
	\begin{equation}\label{eqH1}
	\ess_{0<r<d} v_2(r)H^*_{w} g(r) \leq C \ess_{0<r<d} v_1(r) g(r)
	\end{equation}
	holds with some $C>0$ for all nonnegative and nondecreasing $g$ on $(0,d)$ if and only if
	\begin{equation}\label{eqH2}
	B:=\ess_{0<r<d} v_2(r)\int_r^d\,  \frac{w(t)}{\ess_{t<s<d}v_1(s)}\, dt<\infty
	\end{equation}
	and $C=B$ is the best constant in  \eqref{eqH1}.
\end{theorem}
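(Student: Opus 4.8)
The plan is to prove the two implications separately, reducing the necessity direction to a single extremal test function. Throughout I would abbreviate $V_1(r):=\ess_{r<s<d}v_1(s)$; since the interval $(r,d)$ shrinks as $r$ increases, $V_1$ is nonincreasing on $(0,d)$, so $B=\ess_{0<r<d}v_2(r)\int_r^d w(t)/V_1(t)\,dt$, and $1/V_1$ is nondecreasing.

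\emph{Sufficiency.} Assume $B<\infty$ and let $g\ge 0$ be nondecreasing on $(0,d)$; set $A:=\ess_{0<r<d}v_1(r)g(r)$, which we may assume finite. For a.e.\ $t$ and a.e.\ $s\in(t,d)$ monotonicity gives $g(t)v_1(s)\le g(s)v_1(s)\le A$; taking the essential supremum over $s\in(t,d)$ yields $g(t)V_1(t)\le A$, i.e.\ $g(t)\le A/V_1(t)$ a.e. Inserting this into $H^*_w g$ and integrating gives $H^*_w g(r)\le A\int_r^d w(t)/V_1(t)\,dt$, so multiplying by $v_2(r)$ and taking the essential supremum over $r$ produces $\ess_{0<r<d}v_2(r)H^*_w g(r)\le AB=B\,\ess_{0<r<d}v_1(r)g(r)$. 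Hence \eqref{eqH1} holds with $C=B$.

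\emph{Necessity and sharpness.} Suppose \eqref{eqH1} holds with constant $C$, and test it on $g_0(t):=1/V_1(t)$, which is nonnegative and nondecreasing because $V_1$ is nonincreasing. The crucial point — and the place where the essential supremum in \eqref{eqH2} is exactly the right quantity — is that $v_1(t)g_0(t)=v_1(t)/V_1(t)\le 1$ for a.e.\ $t$. I would establish this by a layer--cake argument: for each $\lambda>0$ put $E_\lambda=\{s:v_1(s)\ge\lambda\}$ and $t^*_\lambda=\inf\{t:|E_\lambda\cap(t,d)|=0\}$; then $|E_\lambda\cap(t^*_\lambda,d)|=0$ while $V_1\ge\lambda$ on $(0,t^*_\lambda)$, so $\{v_1\ge\lambda>V_1\}$ is null, and intersecting over rational $\lambda$ shows $\{v_1>V_1\}$ is null. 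Thus $\ess_{0<r<d}v_1(r)g_0(r)\le 1$, and feeding $g_0$ into \eqref{eqH1} gives $B=\ess_{0<r<d}v_2(r)H^*_w g_0(r)\le C$. Combined with the bound $C\le B$ from the sufficiency step, this shows $C=B$ is the best constant, and in particular \eqref{eqH1} holds with some finite $C$ exactly when $B<\infty$.

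\emph{Technical points and the main obstacle.} Two degeneracies need attention: when $V_1$ vanishes on a subinterval $(t_0,d)$, so $g_0\equiv\infty$ there, and when $B=\infty$. I would handle both by replacing $g_0$ with its truncations $g_{0,n}:=\min(g_0,n)$, which remain nondecreasing and still satisfy $\ess v_1 g_{0,n}\le 1$ (if $g_0(t)>n$ then $V_1(t)<1/n$, hence $v_1(t)\le V_1(t)<1/n$ and $v_1(t)g_{0,n}(t)<1$), and then passing $n\to\infty$ by monotone convergence in the integral defining $H^*_w$. I expect the genuine obstacle to be precisely the a.e.\ inequality $v_1\le V_1$ that makes $g_0$ admissible and optimal; the rest is routine, so I would isolate that as a preliminary observation before running the two directions.
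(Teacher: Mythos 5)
Your proof is correct. Note that the paper itself does not prove this statement -- it is quoted from the reference [GulAJM2013] with the remark ``The following theorem was proved in \dots'' -- so there is no in-text argument to compare against; your write-up is the standard and expected one for such characterizations on the cone of monotone functions: the reduction $g(t)\le A/\ess_{t<s<d}v_1(s)$ for sufficiency, and testing on (truncations of) $g_0=1/\ess_{t<s<d}v_1(s)$ for necessity and sharpness. The one genuinely nontrivial ingredient, the a.e.\ inequality $v_1(t)\le \ess_{t<s<d}v_1(s)$, is established correctly by your layer-cake argument, and the truncation $\min(g_0,n)$ properly disposes of the degenerate cases.
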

\begin{remark}
In \eqref{eqH1} and \eqref{eqH2} it is assumed that $\frac{1}{\infty}=0$ and $0 \cdot \infty = 0$.
\end{remark}

In the following lemma we give local estimates for the maximal singular integral.
\begin{lemma}\label{MarOk1}
	Let $1\le p<\infty$ and $w\in A_{p}(\Omega)$.
	Then for $p>1$ the inequality
	\begin{equation*}\label{eq3.5.}
	\|K^{\ast} f\|_{L_{p,w}(\Omega(x_0,r))} \lesssim w(\Omega(x_0,r))^{\frac{1}{p}} \int_{2r}^{d} \|f\|_{L_{p,w}(\Omega(x_0,t))} \, w(\Omega(x_0,t))^{-\frac{1}{p}} \, \frac{dt}{t}
	\end{equation*}
	holds for any ball $\Omega(x_0,r)$, and for all $f\in\Lpwloc1$.
	
	Moreover, for $p=1$ the inequality
	\begin{equation*}\label{eq3.5.WX}
	\|K^{\ast} f\|_{WL_{1,w}(\Omega(x_0,r))} \lesssim w(\Omega(x_0,r)) \int_{2r}^{d} \|f\|_{L_{1,w}(\Omega(x_0,t))} \, w(\Omega(x_0,t))^{-1} \, \frac{dt}{t}
	\end{equation*}
	holds for any ball $\Omega(x_0,r)$, and for all $f\in\L1wloc1$.
\end{lemma}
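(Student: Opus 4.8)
The plan is to run the standard local decomposition. Fix a ball $\Omega(x_0,r)$, extend $f$ by zero outside $\Omega$ as above, and write $f=f_1+f_2$ with $f_1=f\chi_{\Omega(x_0,2r)}$ and $f_2=f-f_1$, so that by sublinearity $K^{\ast}f(x)\le K^{\ast}f_1(x)+K^{\ast}f_2(x)$ and it suffices to estimate the $L_{p,w}(\Omega(x_0,r))$-norm (respectively the $WL_{1,w}(\Omega(x_0,r))$-norm) of each term. Two ingredients are used repeatedly. First, $w\in A_p(\Omega)$ is doubling and, $\Omega$ being a bounded smooth domain, $w$ may be taken to be the restriction of an $A_p(\Rn)$ weight, so the classical Coifman--Fefferman weighted inequalities for the maximal singular integral apply: $K^{\ast}$ is bounded on $L_{p,w}(\Rn)$ when $p>1$ and of weak type $(1,1)$ with respect to $w$ when $p=1$. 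Second, the $A_p$ condition gives, for every ball, $\big(\int_{\Omega(x_0,t)}w^{-1/(p-1)}\big)^{1/p'}\lesssim|\Omega(x_0,t)|\,w(\Omega(x_0,t))^{-1/p}\lesssim t^{n}\,w(\Omega(x_0,t))^{-1/p}$ (with $\|w^{-1}\|_{L_\infty(\Omega(x_0,t))}\lesssim|\Omega(x_0,t)|\,w(\Omega(x_0,t))^{-1}$ replacing this in the case $p=1$).

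For the near part, since $\operatorname{supp}f_1\subset\Omega(x_0,2r)$, the weighted boundedness of $K^{\ast}$ gives $\|K^{\ast}f_1\|_{L_{p,w}(\Omega(x_0,r))}\le\|K^{\ast}f_1\|_{L_{p,w}(\Rn)}\lesssim\|f_1\|_{L_{p,w}(\Rn)}=\|f\|_{L_{p,w}(\Omega(x_0,2r))}$ for $p>1$, and the same with $WL_{1,w}$ on the left for $p=1$. It remains to absorb $\|f\|_{L_{p,w}(\Omega(x_0,2r))}$ into the right-hand side of the lemma. For this, note that $t\mapsto\|f\|_{L_{p,w}(\Omega(x_0,t))}$ is nondecreasing while $t\mapsto w(\Omega(x_0,t))^{-1/p}$ is nonincreasing, so for $2r\le t\le 4r$ the integrand dominates $\|f\|_{L_{p,w}(\Omega(x_0,2r))}\,w(\Omega(x_0,4r))^{-1/p}$; integrating over $[2r,4r]$ and using $w(\Omega(x_0,4r))\lesssim w(\Omega(x_0,r))$ (doubling) yields $\|f\|_{L_{p,w}(\Omega(x_0,2r))}\lesssim w(\Omega(x_0,r))^{1/p}\int_{2r}^{d}\|f\|_{L_{p,w}(\Omega(x_0,t))}\,w(\Omega(x_0,t))^{-1/p}\,\frac{dt}{t}$. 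This argument needs $4r\le d$; when $4r>d$ the ball $\Omega(x_0,r)$ is comparable to $\Omega$ and the whole estimate follows at once from global boundedness. The case $p=1$ is identical.

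For the far part we establish a pointwise bound that is uniform on $\Omega(x_0,r)$. If $x\in\Omega(x_0,r)$ and $y\in\Omega\setminus\Omega(x_0,2r)$ then $|x-y|\ge\tfrac12|x_0-y|$, and since for $|\alpha|=2m$ the truncated kernels satisfy $|D^{\alpha}_x\Gamma(x-y)|\lesssim|x-y|^{-n}$ uniformly in $\varepsilon$ (cf. \eqref{eq-2.7}), we get $K^{\ast}f_2(x)\lesssim\int_{\Omega\setminus\Omega(x_0,2r)}|x_0-y|^{-n}|f(y)|\,dy$. Decomposing $\Omega\setminus\Omega(x_0,2r)$ into the dyadic annuli $\Omega(x_0,2^{k+1}r)\setminus\Omega(x_0,2^{k}r)$ (only finitely many are nonempty, with radii $2r\lesssim 2^{k}r\lesssim d$ because $f$ is supported in $\Omega$), replacing $|x_0-y|^{-n}$ by $(2^{k}r)^{-n}$ on each, and applying Hölder's inequality together with the $A_p$ bound above on each $\Omega(x_0,2^{k+1}r)$, we obtain $K^{\ast}f_2(x)\lesssim\sum_{k}\|f\|_{L_{p,w}(\Omega(x_0,2^{k+1}r))}\,w(\Omega(x_0,2^{k+1}r))^{-1/p}$, and the monotonicity and doubling properties again let us pass from this dyadic sum to $\int_{2r}^{d}\|f\|_{L_{p,w}(\Omega(x_0,t))}\,w(\Omega(x_0,t))^{-1/p}\,\frac{dt}{t}$. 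Since this bound does not depend on $x$, multiplying by $w(\Omega(x_0,r))^{1/p}$ and integrating over $\Omega(x_0,r)$ gives the required estimate for $K^{\ast}f_2$ (for $p=1$ one first bounds the weak norm by the strong one). Adding the near and far contributions completes the proof.

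I expect the far-part pointwise estimate to be the main obstacle: the work lies in turning the kernel decay $|x_0-y|^{-n}$ into the scale-indexed quantities $\|f\|_{L_{p,w}(\Omega(x_0,t))}\,w(\Omega(x_0,t))^{-1/p}$ that build the Hardy-type integral, and it is exactly here that the Muckenhoupt condition enters, through the rearranged $A_p$ control $\big(\int_{\Omega(x_0,t)}w^{-1/(p-1)}\big)^{p-1}\lesssim|\Omega(x_0,t)|^{p}\,w(\Omega(x_0,t))^{-1}$. The near part is comparatively routine once the classical weighted $L_p$ (respectively weak $(1,1)$) inequality for $K^{\ast}$ is granted, the only delicate point being the monotonicity/doubling step that reconstructs the integral on the right-hand side.
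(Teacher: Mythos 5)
Your proposal is correct and follows essentially the same route as the paper: the decomposition $f=f_1+f_2$ at scale $2r$, the weighted $L_{p,w}$ (resp.\ weak $(1,1)$) boundedness of $K^{\ast}$ for the near part with the subsequent absorption of $\|f\|_{L_{p,w}(\Omega(x_0,2r))}$ into the Hardy-type integral, and the pointwise bound $K^{\ast}f_2(x)\lesssim\int_{\Omega\setminus\Omega(x_0,2r)}|x_0-y|^{-n}|f(y)|\,dy$ combined with H\"older and the $A_p$ estimate $\|w^{-1/p}\|_{L_{p'}(\Omega(x_0,t))}\lesssim|\Omega(x_0,t)|\,w(\Omega(x_0,t))^{-1/p}$ for the far part. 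The only (immaterial) differences are that the paper converts the kernel decay into the integral $\int_{2r}^{d}\frac{dt}{t}$ via the Fubini identity $|x_0-y|^{-n}\approx\int_{|x_0-y|}^{\infty}\frac{dt}{t^{n+1}}$ rather than your dyadic annuli, and handles the near-part absorption through $|\Omega_0|\int_{2r}^{d}\frac{dt}{t^{n+1}}\approx 1$ rather than your monotonicity-plus-doubling step on $[2r,4r]$.
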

\begin{proof}
	For arbitrary $x_0\in\Rn,$ set $\Omega_0=\Omega(x_0,r)$ for the ball centered at $x_0$ and of radius $r$, $2\Omega_0=\Omega(x_0,2r).$ We represent $f$ as
	\begin{equation}\label{denk4.2}
	f=f_1+f_2,~~f_1(y)=f(y)\chi_{2\Omega_0}(y),~~f_2(y)=f(y)\chi_{\dual(2B)}(y),~~r>0
	\end{equation}
	and have
	$$
	\|K^{\ast} f\|_{L_{p,w}(\Omega_0)}\le \|K^{\ast} f_1\|_{L_{p,w}(\Omega_0)}+
	\|K^{\ast} f_2\|_{L_{p,w}(\Omega_0)}.
	$$
	
	Since $f_1\in L_{p,w}(\Omega)$, $K^{\ast} f_1 \in L_{p,w}(\Omega)$ and from the boundedness of $K^{\ast} $ in $L_{p,w}(\Omega)$ for $w\in A_{p}(\Omega)$ (see, for example, \cite[Corollary 7.13]{Duoandik}) it follows that
	\begin{align*}
	\|K^{\ast} f_1\|_{L_{p,w}(\Omega)}
	&\leq \|K^{\ast} f_1\|_{L_{p,w}(\Omega)}
	\lesssim  \, \|f_1\|_{L_{p,w}(\Omega)}
	\approx  \, \|f\|_{L_{p,w}(2\Omega_0)}.
	\end{align*}
	
	It is clear that $x\in \Omega_0$, $y\in \dual (2\Omega_0)$ implies $\frac{1}{2}|x_0-y|\le|x-y|\le \frac{3}{2}|x_0-y|$. Then by the Minkowski inequality and conditions on $\Omega$, we get
	\begin{align*}
	K^{\ast} f_2(x)  & \lesssim \int_{\dual(2\Omega_0)}\frac{|f(y)|}{|x_0-y|^n}dy.
	\end{align*}
	
	By Fubini's theorem we have
	\begin{align*}
	&\int_{\dual (2\Omega_0)} \frac{|f(y)|}{|x_0-y|^{n}} dy \approx
	\int_{\dual (2\Omega_0)} |f(y)| \, \int_{|x-y|}^{\i}\frac{dt}{t^{n+1}}dy
	\\
	& = \int_{2r}^{d}\int_{2r\leq |x_0-y|< t} |f(y)|dy \, \frac{dt}{t^{n+1}}
	\lesssim \int_{2r}^{d}\int_{\Omega(x_0,t)} |f(y)|dy\frac{dt}{t^{n+1}}\ .
	\end{align*}
	By applying H\"older's inequality for $w\in A_{p}(\Omega)$, we get
	\begin{align}\label{sal00}
	&\int_{\dual (2\Omega_0)}\frac{|f(y)|}{|x_0-y|^{n}}dy \lesssim \int_{2r}^{d} \|f\|_{L_1(\Omega(x_0,t))}\, \frac{dt}{t^{n+1}} \notag
	\\
	& \lesssim \int_{2r}^{d}\|f\|_{L_{p,w}(\Omega(x_0,t))} \, \|w^{-1/p}\|_{L_{p'}(\Omega(x_0,t))}  \, \frac{dt}{t^{n+1}} \notag
	\\
	& \lesssim  \, \int_{2r}^{d}\|f\|_{L_{p,w}(\Omega(x,t))} \, w(\Omega(x_0,t))^{-\frac{1}{p}} \, |\Omega(x_0,t)|   \,
	\frac{dt}{t^{n+1}} \notag
	\\
	&\lesssim  \, \int_{2r}^{\i}\|f\|_{L_{p,w}(\Omega(x_0,t))}w(\Omega(x_0,t))^{-\frac{1}{p}}\,\frac{dt}{t}.
	\end{align}
	
	Moreover, for all $p\in (1,\i)$ the inequality
	\begin{align*} \label{ves2}
	\|K^{\ast} f_2\|_{L_{p,w}(\Omega_0)}&\lesssim
	w(\Omega_0)^{\frac{1}{p}} \, \int_{2r}^{d}\|f\|_{L_{p,w}(\Omega(x_0,t))}\, w(\Omega(x_0,t))^{-\frac{1}{p}}\,\frac{dt}{t}.
	\end{align*}
	is valid. Thus
\begin{align*}
\|K^{\ast} f\|_{L_{p,w}(\Omega_0)}&\lesssim  \|f\|_{L_{p,w}(2\Omega_0)}
+w(B)^{\frac{1}{p}}\int_{2r}^{\i}\|f\|_{L_{p,w}(\Omega(x_0,t))}\, w(\Omega(x_0,t))^{-\frac{1}{p}}\,\frac{dt}{t}.
\end{align*}
On the other hand,
\begin{align*}
&\|f\|_{L_{p,w}(2\Omega_0)}
\lesssim  |\Omega_0| \, \int_{2r}^{d} \|f\|_{L_{p,w}(\Omega(x_0,t))}\,\frac{dt}{t^{n+1}}
\\
& \lesssim  w(\Omega_0)^{\frac{1}{p}} \, \|w^{-1/p}\|_{L_{p'}(\Omega_0)} \, \int_{2r}^{d} \|f\|_{L_{p,w}(\Omega(x_0,t))} \, \frac{dt}{t^{n+1}}
\end{align*}
\begin{align*}
& \lesssim w(\Omega_0)^{\frac{1}{p}} \, \int_{2r}^{d} \|f\|_{L_{p,w}(\Omega(x_0,t))} \, \|w^{-1/p}\|_{L_{p'}(\Omega(x_0,t))} \, \frac{dt}{t^{n+1}}
\\
& \lesssim  \, w(\Omega_0)^{\frac{1}{p}} \, \int_{2r}^{d} \|f\|_{L_{p,w}(\Omega(x_0,t))} \, w(\Omega(x_0,t))^{-\frac{1}{p}} \, \frac{dt}{t}.
\end{align*}
Thus
\begin{align*}
\|K^{\ast} f\|_{L_{p,w}(\Omega_0)}&\lesssim  w(\Omega_0)^{\frac{1}{p}}
\int_{2r}^{d} \|f\|_{L_{p,w}(\Omega(x_0,t))} \,w(\Omega(x_0,t))^{-\frac{1}{p}}\,\frac{dt}{t}.
\end{align*}
	
	Let $p=1$. From the weak $(1,1)$ boundedness of $K^{\ast}$  it follows
	that
	\begin{align*}
	\|K^{\ast} f_1\|_{WL_{1,w}(\Omega_0)}
	& \leq \|K^{\ast} f_1\|_{WL_{1,w}(\Omega)}
	\lesssim  \|f_1\|_{L_{1,w}(\Omega)}= \|f\|_{L_{1,w}(2\Omega_0)}
	\\
	 &\lesssim w(\Omega_0)  \int_{2r}^{d}\|f\|_{L_{1,w}(\Omega(x,t))} w(\Omega(x,t))^{-1}\frac{dt}{t}.
	\end{align*}
	
Thus we complete the proof of Lemma \ref{MarOk1}.
\end{proof}

\begin{theorem}\label{Kuzuf1}
Let $1\le p<\i$, $w\in A_{p}(\Omega)$ and the pair $(\varphi_1,\varphi_2)$ satisfy the condition \eqref{eq-2.13}.
Then the operator $K^{\ast} $ is bounded from $M_{p,\varphi_1}(\Omega,w)$ to $M_{p,\varphi_2}(w)$ for $p>1$ and from $M_{1,\varphi_1}(w)$ to $WM_{1,\varphi_2}(w)$ for $p=1$.
\end{theorem}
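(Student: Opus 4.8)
The plan is to derive the Morrey-norm bound by combining the pointwise (local) estimate from Lemma \ref{MarOk1} with the weighted Hardy inequality of Theorem \ref{Hardy}, exactly in the spirit of the proof of Theorem \ref{theor-2.1}. First I would fix a ball $\Omega(x_0,r)$ and apply Lemma \ref{MarOk1}: for $p>1$,
\begin{equation*}
w(\Omega(x_0,r))^{-\frac{1}{p}}\|K^{\ast}f\|_{L_{p,w}(\Omega(x_0,r))} \lesssim \int_{2r}^{d}\|f\|_{L_{p,w}(\Omega(x_0,t))}\,w(\Omega(x_0,t))^{-\frac{1}{p}}\,\frac{dt}{t}.
\end{equation*}
Inside the integrand I would insert $1 = \varphi_1(x_0,t)\cdot\varphi_1(x_0,t)^{-1}$ and estimate $\|f\|_{L_{p,w}(\Omega(x_0,t))}\,w(\Omega(x_0,t))^{-\frac{1}{p}}\varphi_1(x_0,t)^{-1} \le \|f\|_{M_{p,\varphi_1}(\Omega,w)}$, which is nondecreasing in $t$ only after we pass to the essential supremum; so the clean way is to write $g(t) := \operatorname*{ess\,sup}_{0<s<t}\varphi_1(x_0,s)^{-1}w(\Omega(x_0,s))^{-\frac{1}{p}}\|f\|_{L_{p,w}(\Omega(x_0,s))}$ — wait, I want $g$ nondecreasing, so I instead keep $\|f\|_{L_{p,w}(\Omega(x_0,\cdot))}$ which is already nondecreasing and treat the remaining factor $w(\Omega(x_0,t))^{-1/p}/t$ as the Hardy weight.

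Concretely, set $v_2(r) = \varphi_2(x_0,r)^{-1}$, $v_1(t) = \varphi_1(x_0,t)^{-1}$, $w_{\mathrm H}(t) = w(\Omega(x_0,t))^{-\frac{1}{p}}\,t^{-1}$, and $g(t) = \|f\|_{L_{p,w}(\Omega(x_0,t))}$; note $g$ is nonnegative and nondecreasing on $(0,d)$. Lemma \ref{MarOk1} then reads $w(\Omega(x_0,r))^{-1/p}\|K^{\ast}f\|_{L_{p,w}(\Omega(x_0,r))} \lesssim H^{*}_{w_{\mathrm H}}g(r)$ (after extending the integral to $d$, which only enlarges it). Multiplying by $v_2(r)=\varphi_2(x_0,r)^{-1}$ and taking $\operatorname*{ess\,sup}_{0<r<d}$, Theorem \ref{Hardy} yields
\begin{equation*}
\operatorname*{ess\,sup}_{0<r<d}\varphi_2(x_0,r)^{-1}w(\Omega(x_0,r))^{-\frac{1}{p}}\|K^{\ast}f\|_{L_{p,w}(\Omega(x_0,r))} \lesssim B\cdot\operatorname*{ess\,sup}_{0<r<d}\varphi_1(x_0,r)^{-1}g(r),
\end{equation*}
provided the constant $B$ in \eqref{eqH2} is finite. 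But $B = \operatorname*{ess\,sup}_{0<r<d}\varphi_2(x_0,r)^{-1}\int_r^d \frac{w(\Omega(x_0,t))^{-1/p}}{t\,\operatorname*{ess\,sup}_{t<s<d}\varphi_1(x_0,s)^{-1}}\,dt = \operatorname*{ess\,sup}_{0<r<d}\varphi_2(x_0,r)^{-1}\int_r^d \operatorname*{ess\,inf}_{t<s<d}\varphi_1(x_0,s)\cdot w(\Omega(x_0,t))^{-1/p}\frac{dt}{t}$, which is exactly what is controlled by hypothesis \eqref{eq-2.13} (rewriting $w(B(x,s))^{1/p}$ inside and outside the $\operatorname*{ess\,inf}$ there); so $B \lesssim C$ uniformly in $x_0$. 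Taking the supremum over $x_0\in\Rn$ and recognizing the right-hand side as $\|f\|_{M_{p,\varphi_1}(\Omega,w)}$ gives the claimed boundedness $\|K^{\ast}f\|_{M_{p,\varphi_2}(w)} \lesssim \|f\|_{M_{p,\varphi_1}(\Omega,w)}$.

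For the endpoint $p=1$, the argument is identical but uses the weak-type local estimate from Lemma \ref{MarOk1}: replacing $\|K^{\ast}f\|_{L_{p,w}(\Omega(x_0,r))}$ by $\|K^{\ast}f\|_{WL_{1,w}(\Omega(x_0,r))}$ and $w(\Omega(x_0,r))^{1/p}$ by $w(\Omega(x_0,r))$, the same substitution plus Theorem \ref{Hardy} (with $p=1$ in the Hardy weight) produces the bound into $WM_{1,\varphi_2}(w)$. I expect the only real subtlety — hence the step to write out carefully — is the bookkeeping that matches the Hardy finiteness condition \eqref{eqH2} to the hypothesis \eqref{eq-2.13}: one must correctly convert $\operatorname*{ess\,sup}_{t<s<d}\varphi_1(x_0,s)^{-1}$ into $\operatorname*{ess\,inf}_{t<s<d}\varphi_1(x_0,s)$ and reconcile the placement of the weight factors $w(B(x,s))^{1/p}$, and also handle the harmless replacement of the upper limit $\infty$ by $d$ (permissible since $\Omega$ is bounded and $f$ is extended by zero). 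Everything else is a direct transcription of the proof scheme of Theorem \ref{theor-2.1}.
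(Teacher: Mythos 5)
Your overall strategy coincides with the paper's: combine the local estimate of Lemma \ref{MarOk1} with the weighted Hardy inequality of Theorem \ref{Hardy}, taking $g(t)=\|f\|_{L_{p,w}(\Omega(x_0,t))}$ as the nonnegative nondecreasing function. The genuine problem is your concrete choice of $v_1$. With $v_1(t)=\varphi_1(x_0,t)^{-1}$ the right-hand side of \eqref{eqH1} is
$\ess_{0<r<d}\varphi_1(x_0,r)^{-1}\|f\|_{L_{p,w}(\Omega(x_0,r))}$,
which is \emph{not} $\|f\|_{M_{p,\varphi_1}(\Omega,w)}$: the Morrey norm carries the additional factor $w(\Omega(x_0,r))^{-1/p}$, and without it the displayed quantity can be arbitrarily larger than the Morrey norm, so your final step ``recognizing the right-hand side as $\|f\|_{M_{p,\varphi_1}(\Omega,w)}$'' fails. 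The same wrong choice corrupts the constant $B$: you obtain the integrand $\big(\es_{t<s<d}\varphi_1(x_0,s)\big)\,w(\Omega(x_0,t))^{-1/p}\,t^{-1}$, whereas \eqref{eq-2.13} controls the integrand with $\es_{t<s<\infty}\big(\varphi_1(x,s)\,w(B(x,s))^{1/p}\big)$ in place of $\es_{t<s<d}\varphi_1(x_0,s)$. Your parenthetical ``rewriting $w(B(x,s))^{1/p}$ inside and outside the $\operatorname{ess\,inf}$'' is not a legitimate move: that factor depends on the variable $s$ over which the essential infimum is taken and cannot be pulled across it, and the two integrands are not comparable in general.

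Both defects are cured simultaneously by the choice the paper actually makes: $v_1(t)=\varphi_1(x_0,t)^{-1}\,w(\Omega(x_0,t))^{-1/p}$, keeping your $v_2$, your Hardy weight $w_{H}(t)=w(\Omega(x_0,t))^{-1/p}t^{-1}$ and your $g$. Then $\ess_{0<r<d}v_1(r)g(r)\le\|f\|_{M_{p,\varphi_1}(\Omega,w)}$ exactly, and since
$\ess_{t<s<d}v_1(s)=\big(\es_{t<s<d}\varphi_1(x_0,s)\,w(\Omega(x_0,s))^{1/p}\big)^{-1}$,
the constant $B$ of \eqref{eqH2} becomes precisely the quantity bounded by $C\varphi_2(x_0,r)$ in hypothesis \eqref{eq-2.13}, uniformly in $x_0$. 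With that correction the remaining steps you describe --- enlarging $\int_{2r}^{d}$ to $\int_{r}^{d}$, taking the supremum over $x_0$, and the $p=1$ case with the weak norm and the weights $w(\Omega(x_0,t))^{-1}$ --- go through and reproduce the paper's proof.
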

\begin{proof}
By Lemma \ref{MarOk1} and Theorem \ref{Hardy} with $\nu_1(r)=\varphi_{1}(x,r)^{-1}w(\Omega(x,t))^{-\frac{1}{p}}$,
$\nu_{2}(r)=\varphi_{2}(x,r)^{-1}$ and $w(r)=w(\Omega(x,t))^{-\frac{1}{p}}$ we have for $p>1$
\begin{align*}
\|K^{\ast} f\|_{M_{p,\varphi_2}(w)} &\lesssim
\sup_{x\in\Rn,\,r>0}\varphi_2(x,r)^{-1}
\int_r^{\i}\|f\|_{L_{p,w}(\Omega(x,t))}\,
w(\Omega(x,t))^{-\frac{1}{p}}\,\frac{dt}{t}
\\
&\lesssim \sup_{x\in\Rn, r>0} \varphi_1(x,r)^{-1}\, w(\Omega(x,r))^{-\frac{1}{p}}\,\|f\|_{L_{p,w}\Omega(x,r)}
=\|f\|_{M_{p,\varphi_1}(w)}.
\end{align*}
Let $p=1$. By Lemma \ref{MarOk1} and Theorem \ref{Hardy} with $\nu_{2}(r)=\varphi_{2}(x,r)^{-1}$, $\nu_1(r)=\varphi_{1}(x,r)^{-1}w(\Omega(x,t))^{-1}$ and $w(r)=w(\Omega(x,t))^{-1}$ we have
\begin{align*}
\|K^{\ast} f\|_{WM_{1,\varphi_2}(w)} &\lesssim
\sup_{x\in\Rn,\,r>0}\varphi_2(x,r)^{-1}
\int_r^{\i} \|f\|_{L_{p,w}(\Omega(x,t))}\,w(\Omega(x,t))^{-1}\,\frac{dt}{t}
\\
&\lesssim \sup_{x\in\Rn, r>0}\varphi_1(x,r)^{-1}\,w(\Omega(x,r))^{-1}\,\|f\|_{L_{1,w}\Omega(x,r)}
=\|f\|_{M_{1,\varphi_1}(w)}.
\end{align*}
\end{proof}
For $\varphi_{1}(x,r)=\varphi_{2}(x,r)\equiv w(\Omega(x,r))^{\frac{k-1}{p}}$, from Theorem \ref{theor-2.1} we have the following result.
\begin{corollary}\label{cor-2.1}  $\cite{KomShir}$
Let $1\leq p<\infty$, $0\le k<1$ and $w\in A_{p}(\Omega)$. Then the operator
$K^{\ast}$ is bounded on $L_{p,k}(\Omega,w)$ for $p>1$, and bounded from $L_{1,k}(\Omega,w)$ to $WL_{1,k}(\Omega,w)$.
\end{corollary}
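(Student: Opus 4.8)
The plan is to obtain Corollary~\ref{cor-2.1} as the special case of Theorem~\ref{Kuzuf1} in which the pair $(\varphi_1,\varphi_2)$ collapses to the single weight $\varphi(x,r):=w(\Omega(x,r))^{\frac{k-1}{p}}$. By Remark~\ref{rem-2.1}(2), read on the bounded domain $\Omega$ with $\Omega(x,r)$ in place of $B(x,r)$, one has $M_{p,\varphi}(\Omega,w)=L_{p,k}(\Omega,w)$ with equivalent norms, and likewise $WM_{1,\varphi}(\Omega,w)=WL_{1,k}(\Omega,w)$. Thus it is enough to check that $(\varphi,\varphi)$ satisfies the hypothesis~\eqref{eq-2.13} of Theorem~\ref{Kuzuf1} (in the form used in the proof of that theorem, i.e.\ with the integral over $(r,d)$, $d=\mathrm{diam}\,\Omega$); then Theorem~\ref{Kuzuf1} yields both halves of the statement simultaneously.

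First I would carry out the elementary reduction of~\eqref{eq-2.13} for this $\varphi$. The inner essential infimum in~\eqref{eq-2.13} acts on $\varphi(x,s)\,w(\Omega(x,s))^{1/p}=w(\Omega(x,s))^{k/p}$, which is non-decreasing in $s$ (since $w\ge 0$, $s\mapsto w(\Omega(x,s))$ is non-decreasing and $0\le k<1$); hence that infimum equals $w(\Omega(x,t))^{k/p}$, the fraction in~\eqref{eq-2.13} reduces to $w(\Omega(x,t))^{\frac{k-1}{p}}$, and~\eqref{eq-2.13} becomes
\begin{equation*}
\int_r^{d} w(\Omega(x,t))^{\frac{k-1}{p}}\,\frac{dt}{t}\le C\,w(\Omega(x,r))^{\frac{k-1}{p}}\qquad(0<r<d),
\end{equation*}
with $C$ independent of $x$. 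The main point is to prove this inequality. Here I would invoke that $w\in A_{p}(\Omega)\subset A_{\infty}(\Omega)$ satisfies a reverse doubling estimate and that the smooth bounded domain $\Omega$ has the measure density property $|\Omega(x,t)|\approx t^{n}$ for $0<t\le d$; together these give constants $C_0>0$ and $\theta>0$, depending only on $w,n,\Omega$, with
\begin{equation*}
w(\Omega(x,t))\ge C_0\,(t/r)^{\theta}\,w(\Omega(x,r))\qquad(0<r<t\le d).
\end{equation*}
Since $\frac{k-1}{p}<0$, raising this to the power $\frac{k-1}{p}$ and integrating gives
\begin{equation*}
\int_r^{d} w(\Omega(x,t))^{\frac{k-1}{p}}\,\frac{dt}{t}\le C_0^{\frac{k-1}{p}}\,w(\Omega(x,r))^{\frac{k-1}{p}}\int_1^{\infty}u^{\theta\frac{k-1}{p}}\,\frac{du}{u},
\end{equation*}
and the last integral converges because its exponent is strictly negative; this is exactly the required bound. (Alternatively one may argue dyadically: on each annulus $2^{j}r\le t<2^{j+1}r$ the integrand is at most $w(\Omega(x,2^{j}r))^{\frac{k-1}{p}}$, and reverse doubling turns $\sum_{j\ge0}w(\Omega(x,2^{j}r))^{\frac{k-1}{p}}$ into a convergent geometric series of sum $\lesssim w(\Omega(x,r))^{\frac{k-1}{p}}$.)

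Once~\eqref{eq-2.13} is verified for $(\varphi,\varphi)$, Theorem~\ref{Kuzuf1} shows that $K^{\ast}$ is bounded on $M_{p,\varphi}(\Omega,w)=L_{p,k}(\Omega,w)$ for $p>1$ and bounded from $M_{1,\varphi}(\Omega,w)=L_{1,k}(\Omega,w)$ into $WM_{1,\varphi}(\Omega,w)=WL_{1,k}(\Omega,w)$ for $p=1$, which is the assertion of the corollary. The only genuinely nontrivial ingredient is the reverse doubling step of the middle paragraph, and there the care needed is merely to ensure the $A_{\infty}$ and measure-density constants are uniform in $x\in\overline\Omega$ and over $0<r<t\le d$; this is classical for $A_{p}$ weights on a bounded smooth domain, and the rest of the argument is routine bookkeeping.
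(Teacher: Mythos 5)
Your proof is correct and follows essentially the same route as the paper: specialize $\varphi_1=\varphi_2=w(\Omega(x,r))^{\frac{k-1}{p}}$ in Theorem~\ref{Kuzuf1} so that $M_{p,\varphi}(\Omega,w)=L_{p,k}(\Omega,w)$ by Remark~\ref{rem-2.1}(2). The paper records only this one-line specialization (citing Theorem~\ref{theor-2.1}) and never checks the hypothesis \eqref{eq-2.13}; your middle paragraph --- reducing \eqref{eq-2.13} to $\int_r^{d} w(\Omega(x,t))^{\frac{k-1}{p}}\,\frac{dt}{t}\lesssim w(\Omega(x,r))^{\frac{k-1}{p}}$ and closing it via reverse doubling of $A_\infty$ weights on the Ahlfors-regular set $\Omega$ --- supplies exactly the verification the paper leaves implicit, and your observation that the integral must be taken over $(r,d)$ with $d=\mathrm{diam}\,\Omega$ (it diverges over $(r,\infty)$ since $w(\Omega(x,t))$ stabilizes for $t\ge d$) is a genuine point of care that the paper glosses over.
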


\section{Solvability of higher-order elliptic equations  in ${W^{m}M_{p,\varphi}(\Omega,w)}$}

In this section we give an application the boundedness of maximal singular operators in generalized weighted Morrey spaces ${M_{p,\varphi}(\Omega,w)}$ to regularity estimates and solvability of higher-order elliptic equations  in generalized weighted Sobolev-Morrey spaces ${W^{m}M_{p,\varphi}(\Omega,w)}$.

We can now state and prove our main result.
\begin{theorem}\label{theor-3.1}
Let $\Omega\subset \Rn$ be a bounded domain with smooth boundary $\partial\Omega$ and $\varphi$ satisfy the condition
\begin{equation}\label{eq-2.13FG}
\int^{\infty}_{r}\frac{\es_{t<s<\infty}\varphi(x,s) w(B(x,s))^{\frac{1}{p}}}{w(B(x,s))^{\frac{1}{p}}}\frac{dt}{t}\leq C\varphi(x,r),
\end{equation}
where $C$ does not depend on $x$ and $r$. If $w\in A_{p}(\Omega)$, $f\in M_{p,\varphi}(\Omega,w)$ and $u(x)$ a weak solution of \eqref{eq-2.1}, then there exists a constant $C$ depending only $n,m,w$ and $\Omega$ such that
\begin{equation*}
\|u\|_{W^{2m}M_{p,\varphi}(\Omega,w)} \le  C \, \|f\|_{M_{p,\varphi}(\Omega,w)}.
\end{equation*}
\end{theorem}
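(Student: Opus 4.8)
The plan is to estimate each Morrey norm $\|D^s u\|_{M_{p,\varphi}(\Omega,w)}$ for $0\le|s|\le 2m$ by splitting according to the size of $|s|$ relative to $2m-n$, exactly as the pointwise lemmas of Section~2 are organized. First I would handle the low-order derivatives $|s|\le 2m-n$: by Lemma~\ref{lemm-2.1} we have the pointwise bound $|D^s u(x)|\le C\,Mf(x)$, so $\|D^s u\|_{M_{p,\varphi}(\Omega,w)}\le C\,\|Mf\|_{M_{p,\varphi}(\Omega,w)}$, and Theorem~\ref{theor-2.1} (applied with $\varphi_1=\varphi_2=\varphi$, which is legitimate because of hypothesis \eqref{eq-2.13FG}) gives $\|Mf\|_{M_{p,\varphi}(\Omega,w)}\le C\,\|f\|_{M_{p,\varphi}(\Omega,w)}$ since $w\in A_p(\Omega)$. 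That disposes of all the terms with $|s|\le 2m-n$ at once, and also (after an elementary interpolation / intermediate-derivative argument, or simply because $M_{p,\varphi}(\Omega,w)$ on a bounded domain controls lower-order terms) the range $2m-n<|s|<2m$, where one can instead use the estimates \eqref{eq-2.5}, \eqref{eq-2.6} together with \eqref{eq-2.9} and \eqref{eq-2.10}: there the kernel $D^s_x G_m(x,y)$ is integrable, so $D^s u$ is given by an honest integral operator with a kernel dominated by a locally integrable convolution kernel plus a bounded term, and the resulting operator is again controlled by $M$.

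The crucial case is $|s|=2m$, and here I would not argue pointwise (the kernel $D^\alpha_x\Gamma(x-y)$ is genuinely singular) but rather use the decomposition already set up in the paper. Write $G_m=\Gamma+h$. For the $h$-part, by Lemma~\ref{lemm-2.3} and \eqref{eq-2.10} the operator $f\mapsto \int_\Omega D^\alpha_x h(x,y)f(y)\,dy$ has a kernel bounded (for each $x$) by a multiple of $d(x)^{2m-n-|\alpha|}$ on $|x-y|\le d(x)$, together with the Green-type estimates on the complementary region $D$ handled via Lemma~\ref{lemm-2.2}; in all of these the relevant bound is again of the form $Mf(x)$ (or a sum of $M$-type and $L^1$-type pieces that are harmless on the bounded domain $\Omega$), which Theorem~\ref{theor-2.1} absorbs into $\|f\|_{M_{p,\varphi}(\Omega,w)}$. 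For the singular $\Gamma$-part, \eqref{eq-2.11} expresses $D^\alpha_x\int_\Omega D^\beta_x\Gamma(x-y)f(y)\,dy = Kf(x)+a(x)f(x)$ with $a$ bounded and $K$ a Calderón–Zygmund operator whose maximal operator is $K^\ast$. Now invoke Theorem~\ref{Kuzuf1}: since $w\in A_p(\Omega)$ and $\varphi$ satisfies \eqref{eq-2.13FG}, $K^\ast$ — and hence $K$, pointwise dominated by $K^\ast$ — is bounded on $M_{p,\varphi}(\Omega,w)$, giving $\|Kf\|_{M_{p,\varphi}(\Omega,w)}\le C\|f\|_{M_{p,\varphi}(\Omega,w)}$; the term $a(x)f(x)$ is trivially bounded since $a\in L^\infty$. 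Summing the contributions of $\Gamma$ and $h$ yields $\|D^\alpha u\|_{M_{p,\varphi}(\Omega,w)}\le C\|f\|_{M_{p,\varphi}(\Omega,w)}$ for every $|\alpha|=2m$.

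Finally, I would add the finitely many estimates over $0\le|s|\le 2m$ to obtain
\[
\|u\|_{W^{2m}M_{p,\varphi}(\Omega,w)}=\sum_{0\le|s|\le 2m}\|D^s u\|_{M_{p,\varphi}(\Omega,w)}\le C\,\|f\|_{M_{p,\varphi}(\Omega,w)},
\]
with $C$ depending only on $n$, $m$, $w$ and $\Omega$, as claimed. The main obstacle is the top-order case $|s|=2m$: one must legitimately pass from the weak solution to the representation \eqref{eq-2.11} (justifying the differentiation under the integral sign away from the diagonal, which is where Lemma~\ref{lemm-2.3} and \eqref{eq-2.10} enter) and then rely on the Calderón–Zygmund boundedness in $M_{p,\varphi}(\Omega,w)$ supplied by Theorem~\ref{Kuzuf1}; everything else reduces, via the explicit kernel bounds \eqref{eq-2.4}--\eqref{eq-2.9} and Lemmas~\ref{lemm-2.1}, \ref{lemm-2.2}, \ref{lemm-2.4}, to the boundedness of $M$ on $M_{p,\varphi}(\Omega,w)$. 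A secondary technical point is checking that the auxiliary $L^1$-type terms appearing in Lemma~\ref{lemm-2.4} (the $\int_\Omega|f||g|\,dx$ and $\int_\Omega Mg\,|f|\,dx$ pieces) are controlled after a duality pairing against $g$ in the appropriate associate space — or, more directly, that on a bounded domain these are dominated by the $M$-type terms and hence cause no trouble.
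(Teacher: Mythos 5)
Your treatment of the derivatives of order $|s|\le 2m-1$ matches the paper's (Lemma \ref{lemm-2.1} together with the boundedness of $M$ from Theorem \ref{theor-2.1} applied with $\varphi_1=\varphi_2=\varphi$), and your use of Theorem \ref{Kuzuf1} for the Calder\'on--Zygmund piece $Kf+af$ coming from \eqref{eq-2.11} is also fine. The gap is in the remaining part of the top-order case $|\alpha|=2m$, which is where the real work lies. You assert that the contribution of the region $D=\{(x,y):|x-y|>d(x)\}$ is ``again of the form $Mf(x)$'' or consists of ``$L^1$-type pieces that are harmless on the bounded domain.'' That is not what the available estimates give. On $D$ the bound \eqref{eq-2.7} only yields $|D^{\alpha}G_{m}(x,y)|\lesssim |x-y|^{-n}$ (the factor $\min\{1,d(y)/|x-y|\}^{m}$ is of order one there, since $d(y)\le d(x)+|x-y|\le 2|x-y|$), and $\int_{\{y\in\Omega:\,|x-y|>d(x)\}}|x-y|^{-n}dy$ blows up like $\log(d/d(x))$ as $x$ approaches $\partial\Omega$; there is no pointwise domination by $Mf(x)$, and boundedness of $\Omega$ does not help because the obstruction is the degeneration of $d(x)$ at the boundary, not the size of the domain. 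The only estimate available on $D$ is the genuinely bilinear Lemma \ref{lemm-2.2}, whose right-hand side contains the term $\int_{D}Mg\,|f|$ with the maximal operator acting on the \emph{dual} test function $g$; this term is not dominated by any operator applied to $f$ and paired with $|g|$, so your proposed reduction to operator boundedness on $M_{p,\varphi}(\Omega,w)$ breaks down exactly here.

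This is precisely why Lemma \ref{lemm-2.4} is stated as a pairing and why the paper's proof of the case $|\alpha|=2m$ is a duality argument rather than a direct one: it takes $g=(D^{\alpha}u)^{p-1}w$, so that $\int_{\Omega(x,r)}D^{\alpha}u\,|g|=\int_{\Omega(x,r)}|D^{\alpha}u|^{p}w$, applies Lemma \ref{lemm-2.4} and H\"older's inequality to each of the four resulting terms, uses the boundedness of $K^{\ast}$ and $M$ in $M_{p,\varphi}(\Omega,w)$ for three of them and the boundedness of $M$ with respect to the dual weight $w^{-p'/p}\in A_{p'}(\Omega)$ for the $Mg$ term, and arrives at $\|D^{\alpha}u\|_{M_{p,\varphi}(\Omega,w)}\le C\,\|f\|_{M_{p,\varphi}(\Omega,w)}^{1/p}\,\|D^{\alpha}u\|_{M_{p,\varphi}(\Omega,w)}^{1/p'}$, from which the claim follows by dividing (this step also needs the a priori finiteness of $\|D^{\alpha}u\|_{M_{p,\varphi}(\Omega,w)}$, which the paper addresses at the end of its proof). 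You do mention this duality pairing, but only as a ``secondary technical point,'' and your suggested shortcut that the extra terms are ``dominated by the $M$-type terms'' is false; the pairing with the specific $g$ above and the $A_{p'}$ estimate for the $Mg$ term are the core of the argument, not an afterthought. If you carry that out in full you recover the paper's proof.
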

\begin{proof}
Since $M$ is a bounded operator in $M_{p,\varphi}(\Omega,w)$, by Lemma \ref{lemm-2.1} and Theorem \ref{theor-2.1} it follows that
\begin{equation*}
\sum\limits_{|\alpha|\leq 2m-1}\|D^{\alpha}u\|_{M_{p,\varphi}(\Omega,w)}
\lesssim \|Mf\|_{M_{p,\varphi}(\Omega,w)} \lesssim \, \|f\|_{M_{p,\varphi}(\Omega,w)}.
\end{equation*}
Therefore, it only remains to estimate $\|D^{\alpha}u\|^{p}_{M_{p,\varphi}(\Omega,w)}$ for $|\alpha|= 2m$.

Let $w \in A_{p}(\Omega)$ and $g(x)=(D^{\alpha}u(x))^{p-1}w(x)$. By Lemma \ref{lemm-2.4} we see that
\begin{align}\label{eq-3.1K}
&\|D^{\alpha}u\|_{M_{p,\varphi}(\Omega,w)} =
\sup\limits_{x\in \Omega,r>0}\varphi^{-1}(x,r)\,w (\Omega(x,r))^{-1/p} \, \Big(\int_{\Omega(x,r)} D^{\alpha}u(y) \, |g(y)|dy \Big)^{1/p} \notag
\\
& \le \sup\limits_{x\in \Omega,r>0}\varphi^{-1}(x,r)\,w (\Omega(x,r))^{-1/p} \, \Big(\int_{\Omega(x,r)} K^{\ast}f(y) \, |g(y)|dy + \int_{\Omega(x,r)} Mf(y) \, |g(y)|dy  \notag
\\
&  + \int_{\Omega(x,r)} Mg(y) \, |f(y)|dy + \int_{\Omega(x,r)} |f(y)|\,|g(y)|dy\Big)^{1/p} \le
I + II + III + IV.
\end{align}

By the definition of $g(x)$
\begin{align*}
\int_{\Omega(x,r)} \frac{|g(x)|^{p'}}{w^{\frac{p'}{p}}(x)}dx
& = \int_{\Omega(x,r)}|D^{\alpha}u(x)|^{p} w(x)dx.
\end{align*}

Since $K^{\ast}$ and $M$ are bounded operators in $M_{p,\varphi}(\Omega,w)$, by Corollary \ref{cor-2.1} applying the H\"{o}lder inequality, it follows that
\begin{align}\label{eq-3.2}
& I = \sup\limits_{x\in \Omega,r>0}\varphi^{-1}(x,r) \,w (\Omega(x,r))^{-\frac{1}{p}} \, \Big(\int_{\Omega(x,r)} K^{\ast}f(y) \, |g(y)|dy\Big)^{\frac{1}{p}}  \notag
\\
& \leq \sup\limits_{x\in \Omega,r>0} \varphi^{-1}(x,r) \,w (\Omega(x,r))^{-\frac{1}{p}} \, \Big( \int_{\Omega(x,r)} (K^{\ast}f(y))^{p} w(y)dy \Big)^{\frac{1}{p^2}} \Big(\int_{\Omega(x,r)} \frac{|g(y)|^{p'}}{w^{\frac{p'}{p}}(y)}dy \Big)^{\frac{1}{p'}} \, \notag
\\
& \le \|K^{\ast}f\|_{M_{p,\varphi}(\Omega,w)}^{\frac{1}{p}} \,
\sup\limits_{x\in \Omega,r>0} \, \varphi^{-\frac{1}{p'}}(x,r) \,w (\Omega(x,r))^{-\frac{1}{pp'}} \, \Big(\int_{\Omega(x,r)} \big|D^{\alpha}u(y)\big|^p \, w(y) dy \Big)^{\frac{1}{p'}} \, \notag
\\
& \lesssim \|f\|_{M_{p,\varphi}(\Omega,w)}^{\frac{1}{p}} \,
\|D^{\alpha}u \|_{M_{p,\varphi}(\Omega,w)}^{\frac{1}{p'}},
\end{align}
where $\frac{1}{p}+\frac{1}{p'}=1$.

In the same way, we obtain that
\begin{align}\label{eq-3.3}
& II = \sup\limits_{x\in \Omega,r>0}\varphi^{-1}(x,r) \,w (\Omega(x,r))^{-\frac{1}{p}} \, \Big(\int_{\Omega(x,r)} Mf(y) \, |g(y)|dy\Big)^{1/p}  \notag
\\
& \leq \sup\limits_{x\in \Omega,r>0} \varphi^{-1}(x,r) \,w (\Omega(x,r))^{-\frac{1}{p}} \, \Big( \int_{\Omega(x,r)} (Mf(y))^{p} w(y)dy \Big)^{\frac{1}{p^2}} \Big(\int_{\Omega(x,r)} \frac{|g(y)|^{p'}}{w^{\frac{p'}{p}}(y)}dy \Big)^{\frac{1}{p'}} \, \notag
\\
& \le \|Mf\|_{M_{p,\varphi}(\Omega,w)}^{\frac{1}{p}} \,
\sup\limits_{x\in \Omega,r>0} \, \varphi^{-\frac{1}{p'}}(x,r) \,w (\Omega(x,r))^{-\frac{1}{pp'}} \, \Big(\int_{\Omega(x,r)} \big|D^{\alpha}u(y)\big|^p \, w(y) dy \Big)^{\frac{1}{p'}} \, \notag
\\
& \lesssim \|f\|_{M_{p,\varphi}(\Omega,w)}^{\frac{1}{p}} \,
\|D^{\alpha}u \|_{M_{p,\varphi}(\Omega,w)}^{\frac{1}{p'}},
\end{align}
and
\begin{align}\label{eq-3.4}
& III = \sup\limits_{x\in \Omega,r>0}\varphi^{-1}(x,r) \, w(\Omega(x,r))^{-\frac{1}{p}} \, \Big(\int_{\Omega(x,r)} |f(y)| \, |g(y)|dy\Big)^{\frac{1}{p}}  \notag
\\
& \leq \sup\limits_{x\in \Omega,r>0} \varphi^{-1}(x,r) \, w(\Omega(x,r))^{-\frac{1}{p}} \, \Big( \int_{\Omega(x,r)} |f(y)|^{p} w(y)dy \Big)^{\frac{1}{p^2}} \Big(\int_{\Omega(x,r)} \frac{|g(y)|^{p'}}{w^{\frac{p'}{p}}(y)}dy \Big)^{\frac{1}{p'}} \, \notag
\\
& \le \|f\|_{M_{p,\varphi}(\Omega,w)}^{\frac{1}{p}} \,
\sup\limits_{x\in \Omega,r>0} \, \varphi^{-\frac{1}{p'}}(x,r) \, w(\Omega(x,r))^{-\frac{1}{p'}} \, \Big(\int_{\Omega(x,r)} \big|D^{\alpha}u(y)\big|^p \, w(y) dy \Big)^{\frac{1}{p'}} \, \notag
\\
& \lesssim \|f\|_{M_{p,\varphi}(\Omega,w)}^{\frac{1}{p}} \,
\|D^{\alpha}u \|_{M_{p,\varphi}(\Omega,w)}^{\frac{1}{p'}}.
\end{align}
For the last term in \eqref{eq-3.1K}, taking into account that $w^{-\frac{p'}{p}}\in A_{p'}(\Omega)$, we have that
\begin{align}\label{eq-3.5}
& IV = \sup\limits_{x\in \Omega,r>0}\varphi^{-1}(x,r) \, w(\Omega(x,r))^{-\frac{1}{p}} \, \Big(\int_{\Omega(x,r)} Mg(y) \, |f(y)|dy\Big)^{\frac{1}{p}}  \notag
\\
& \leq \sup\limits_{x\in \Omega,r>0} \varphi^{-1}(x,r) \, w(\Omega(x,r))^{-\frac{1}{p}} \, \Big( \int_{\Omega(x,r)} (f(y))^{p} w(y)dy \Big)^{\frac{1}{p^2}} \Big(\int_{\Omega(x,r)} \frac{(Mg(y))^{p'}}{w^{\frac{p'}{p}}(y)}dy \Big)^{\frac{1}{p'}} \, \notag
\\
& \le \|f\|_{M_{p,\varphi}(\Omega,w)}^{\frac{1}{p}} \,
\sup\limits_{x\in \Omega,r>0} \, \varphi^{-\frac{1}{p'}}(x,r) \, w(\Omega(x,r))^{-\frac{1}{pp'}} \, \Big(\int_{\Omega(x,r)} \big|D^{\alpha}u(y)\big|^p \, w(y) dy \Big)^{\frac{1}{p'}} \, \notag
\\
& \lesssim \|f\|_{M_{p,\varphi}(\Omega,w)}^{\frac{1}{p}} \,
\|D^{\alpha}u \|_{M_{p,\varphi}(\Omega,w)}^{\frac{1}{p'}}.
\end{align}
Then, by \eqref{eq-3.2}, \eqref{eq-3.3}, \eqref{eq-3.4} and  \eqref{eq-3.5} we have
\begin{equation*}
\|D^{\alpha}u\|_{M_{p,\varphi}(\Omega,w)}\leq C \,  \|f\|_{M_{p,\varphi}(\Omega,w)}^{\frac{1}{p}} \,
\|D^{\alpha}u \|_{M_{p,\varphi}(\Omega,w)}^{\frac{1}{p'}}.
\end{equation*}

Then, we obtain
\begin{equation}\label{eq-3.6}
\|D^{\alpha}u\|_{M_{p,\varphi}(\Omega,w)} \lesssim \,  \|f\|_{M_{p,\varphi}(\Omega,w)}
\end{equation}
and the theorem is proved for $u\in W^{2m}M_{p,\varphi}(\Omega,w)$.

It is easy to show that by using classical trace theorems in Sobolev spaces and the definition of $w \in A_{p}$ the weak solution $u$ of \eqref{eq-2.1} belongs to $ W^{2m}M_{p,\varphi}(\Omega,w)$.
\end{proof}

\section{Estimates for any order uniformly elliptic equations.}

Consider a weak solution of Dirichlet problem
\begin{equation}\label{eq-4.1}
\left\{
\begin{array}{l}
Lu = f \,\,\,\mbox{in} \,\,\, \Omega ,
\\
B_{j}u = 0 \,\,\, \mbox{in}\,\,\, \partial\Omega , \,\,\, 0\leq j\leq m-1,
\end{array}\right.
\end{equation}
where $L=\sum\limits_{|\alpha|\leq 2m}a_{\alpha}D^{\alpha}$ - is uniformly elliptic and $B_{j}=\sum\limits_{|\alpha|\leq j}b_{\alpha}D^{\alpha}$ , $0\leq j\leq m-1$ are the boundary operators defined in [1].

There exists a constant $\gamma$ such that
$$
\gamma^{-1}w(x)|\xi|^{2}\leq \sum\limits_{|\alpha|\leq 2m} a_{\alpha}(x)\xi_{\alpha}\xi_{\beta}\leq \gamma w(x)|\xi|^{2},
$$
a.e. $x\in \Omega$, $\forall\xi\in \Rn$ and matrix $a_{\alpha}(x)$ is real symmetrical matrix.

We define $l_{1}>\max\limits_{j}(2m-j)$ and $l_{0}=\max\limits_{j}(2m-j)$. If $a_{\alpha}\in C^{l_{1}+1}({\overline{\Omega}})$, $|\alpha|\leq 2m$, $b_{\alpha} \in C^{l_{1}+1}(\partial \Omega)$, $0\leq j\leq m-1$, and $\partial\Omega \in C^{l_{1}+m+1}$, then we have Green function $G_{m}$ and Poisson kernels $K_{j}$ for $0\leq j\leq m-1$ exist whenever $l_{1}>2(l_{0}+1)$ for $n=2$ and $l_{1}>\frac{3}{2}l_{0}$ for $n\geq 3$.

Moreover, whenever they are defined, Green function and Poisson kernels of the operator $L$ with these boundary conditions satisfy the estimates \eqref{eq-2.4}, \eqref{eq-2.5}, \eqref{eq-2.6},  \eqref{eq-2.7} and  \eqref{eq-2.8} (see [4] and [6]). Then the following result is valid for weak solution of problem \eqref{eq-4.1}.

\begin{theorem}\label{theor-4.1}
Let $\Omega\subset \Rn$ be a bounded domain with smooth boundary $\partial \Omega$ and the coefficients of operators $L$ and $B_{j}$ satisfy the conditions $a_{\alpha}\in C^{l_{1}+1}({\overline{\Omega}})$, $|\alpha|\leq 2m$, $b_{\alpha} \in C^{l_{1}+1}(\partial \Omega)$, $0\leq j\leq m-1$. If $w \in A_{p}(\Omega)$, $f \in M_{p,\varphi}(\Omega,w)$, $\varphi$ satisfies the condition
\eqref{eq-2.13FG} and $u(x)$ is a weak solution of \eqref{eq-4.1}, then there exists a constant $C$ depending only on $n,m,w$ and $\Omega$ such that
\begin{equation}\label{eq-4.2}
\|u\|_{W^{2m}M_{p,\varphi}(\Omega,w)}\leq C \, \|f\|_{M_{p,\varphi}(\Omega,w)}.
\end{equation}
\end{theorem}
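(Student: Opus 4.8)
The plan is to reduce Theorem~\ref{theor-4.1} to the polyharmonic case already settled in Theorem~\ref{theor-3.1}. The decisive point is that, under the stated smoothness hypotheses on $a_\alpha$, $b_\alpha$ and $\partial\Omega$ — together with the dimensional restrictions $l_1>2(l_0+1)$ for $n=2$ and $l_1>\frac{3}{2}l_0$ for $n\ge 3$ — the Green function $G_m(x,y)$ and the Poisson kernels $K_j(x,y)$ of the pair $(L,\{B_j\})$ exist and obey exactly the pointwise bounds \eqref{eq-2.4}--\eqref{eq-2.8} (this is the content of the references [4] and [6] quoted above). Since every intermediate statement used in Section~4 depends on the operator only through these kernel estimates, it transfers verbatim.

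Concretely, I would first record that the weak solution of \eqref{eq-4.1} admits the representation $u(x)=\int_\Omega G_m(x,y)f(y)\,dy$ with $G_m=\Gamma+h$ as in \eqref{eq-2.3}, where now $\Gamma$ is a fundamental solution of the constant-coefficient principal part of $L$ (frozen at a point) and $h$ is the corrector solving the corresponding homogeneous boundary problem; the perturbation coming from the lower-order terms and from the variation of the leading coefficients is, by $a_\alpha\in C^{l_1+1}(\overline{\Omega})$, smooth enough to be absorbed into the uniformly bounded part. With this in hand, Lemmas~\ref{lemm-2.1}--\ref{lemm-2.4} hold with $G_m$ the Green function of $L$: for $|\alpha|\le 2m-n$ one gets $|D^\alpha u(x)|\le C\,Mf(x)$, and for $|\alpha|=2m$ one gets the splitting of $D^\alpha u$ into a Calder\'on--Zygmund operator $K$ (whose kernel satisfies the standard estimates thanks to the regularity of the coefficients), a multiplication operator $a(x)f(x)$ with $a\in L_\infty$, and a uniformly bounded contribution from $D^\alpha_x h$, which yields the integral inequality of Lemma~\ref{lemm-2.4}.

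Then I would run the argument of Theorem~\ref{theor-3.1} line by line. Since $w\in A_p(\Omega)$ and $\varphi$ satisfies \eqref{eq-2.13FG}, Theorem~\ref{theor-2.1} (for $M$) and Theorem~\ref{Kuzuf1} (for $K^\ast$) give boundedness of $M$ and $K^\ast$ on $M_{p,\varphi}(\Omega,w)$. The low-order part $\sum_{|\alpha|\le 2m-1}\|D^\alpha u\|_{M_{p,\varphi}(\Omega,w)}\lesssim\|Mf\|_{M_{p,\varphi}(\Omega,w)}\lesssim\|f\|_{M_{p,\varphi}(\Omega,w)}$ follows at once from Lemma~\ref{lemm-2.1}. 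For $|\alpha|=2m$, choosing the dual density $g(x)=(D^\alpha u(x))^{p-1}w(x)$ and invoking Lemma~\ref{lemm-2.4}, H\"older's inequality with exponents $p$ and $p'$, the $A_p$ condition on $w$ (equivalently $w^{-p'/p}\in A_{p'}(\Omega)$, used in the term containing $Mg$), and the boundedness of $K^\ast$ and $M$, I obtain
\[
\|D^\alpha u\|_{M_{p,\varphi}(\Omega,w)}\le C\,\|f\|_{M_{p,\varphi}(\Omega,w)}^{1/p}\,\|D^\alpha u\|_{M_{p,\varphi}(\Omega,w)}^{1/p'},
\]
whence $\|D^\alpha u\|_{M_{p,\varphi}(\Omega,w)}\lesssim\|f\|_{M_{p,\varphi}(\Omega,w)}$; summing over $|\alpha|\le 2m$ gives \eqref{eq-4.2}. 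As in Theorem~\ref{theor-3.1}, the a priori membership $u\in W^{2m}M_{p,\varphi}(\Omega,w)$ that legitimizes this computation follows from the classical trace theorems in Sobolev spaces together with $w\in A_p(\Omega)$.

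The main obstacle I anticipate is not the Morrey-space bookkeeping — that is a faithful repeat of Section~3 and of the proof of Theorem~\ref{theor-3.1} — but verifying that the singular operator produced when one differentiates the $\Gamma$-part of the Green function of the variable-coefficient operator $L$ genuinely is a Calder\'on--Zygmund operator whose kernel satisfies the standard size and smoothness estimates, uniformly in the freezing point. This is precisely where the hypotheses $a_\alpha\in C^{l_1+1}(\overline{\Omega})$ and the dimension-dependent lower bounds on $l_1$ enter; granting the kernel bounds \eqref{eq-2.4}--\eqref{eq-2.8} as asserted via [4] and [6], the remainder is routine.
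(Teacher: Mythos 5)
Your proposal is correct and follows essentially the same route as the paper: both reduce Theorem~\ref{theor-4.1} to the argument of Theorem~\ref{theor-3.1} by invoking the fact (from the cited references) that the Green function and Poisson kernels of $(L,\{B_j\})$ satisfy the same estimates \eqref{eq-2.4}--\eqref{eq-2.8}, so that Lemmas~\ref{lemm-2.1}--\ref{lemm-2.4} and the boundedness of $M$ and $K^{\ast}$ on $M_{p,\varphi}(\Omega,w)$ apply verbatim. In fact your write-up is considerably more detailed than the paper's own two-sentence proof, and correctly cites Theorems~\ref{theor-2.1} and~\ref{Kuzuf1} for the operator bounds where the paper somewhat loosely points to Corollary~\ref{cor-2.1}.
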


The proof Theorem \ref{theor-4.1} is a consequence of the above estimates of the Green function and Lemma \ref{lemm-2.4}. Corollary \ref{cor-2.1} implies that the operators $M$ and $K^{\ast}$ are bounded in $M_{p,\varphi}(\Omega,w)$. Therefore statement of the Theorem \ref{theor-4.1} and estimate \eqref{eq-4.2} are immediately consequence of inequalities in Lemma \ref{lemm-2.4} and Corollary \ref{cor-2.1}.
Thus the theorem is proved.

From Theorems \ref{Kuzuf1} and \ref{theor-4.1}, and estimates in Lemma \ref{lemm-2.4} we get the following corollary.
\begin{corollary}\label{cor-4.1}
Let $\Omega\subset \Rn$ be a bounded domain with smooth boundary $\partial \Omega$ and the coefficients of operators $L$ and $B_{j}$ satisfy the conditions $a_{\alpha}\in C^{l_{1}+1}({\overline{\Omega}})$, $|\alpha|\leq 2m$, $b_{\alpha} \in C^{l_{1}+1}(\partial \Omega)$, $0\leq j\leq m-1$. If $w \in A_{p}(\Omega)$, $f \in M_{p,\varphi_1}(\Omega,w)$, the pair $(\varphi_1,\varphi_2)$ satisfies the condition \eqref{eq-2.13} and $u(x)$ is a weak solution of \eqref{eq-4.1}, then there exists a constant $C$ depending only on $n,m, w$ and $\Omega$ such that
$$
\|u\|_{W^{2m}M_{p,\varphi_{2}}(\Omega,w)}\leq C \|f\|_{M_{p,\varphi_{1}}(\Omega,w)}.
$$
\end{corollary}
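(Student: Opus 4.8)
The plan is to reproduce verbatim the argument of Theorem~\ref{theor-4.1} (equivalently of Theorem~\ref{theor-3.1}, through \eqref{eq-3.1K}--\eqref{eq-3.6}), but to feed in the \emph{two-weight} mapping properties $M,K^{\ast}\colon M_{p,\varphi_1}(\Omega,w)\to M_{p,\varphi_2}(\Omega,w)$ furnished by Theorem~\ref{theor-2.1} and Theorem~\ref{Kuzuf1} in place of the self-maps used there. First I would write $\|u\|_{W^{2m}M_{p,\varphi_2}(\Omega,w)}=\sum_{|\alpha|\le 2m}\|D^{\alpha}u\|_{M_{p,\varphi_2}(\Omega,w)}$ and split according to the order of $\alpha$. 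For $|\alpha|\le 2m-1$, the pointwise bound of Lemma~\ref{lemm-2.1} (valid for the operator $L$ via the Green-function estimates \eqref{eq-2.4}--\eqref{eq-2.6}; for $2m-n<|\alpha|\le 2m-1$ the Riesz-type integral $\int_{\Omega}|x-y|^{2m-n-|\alpha|}|f(y)|\,dy$ is dominated by $Mf(x)$ because $\Omega$ is bounded) gives $|D^{\alpha}u(x)|\le C\,Mf(x)$, and hence $\|D^{\alpha}u\|_{M_{p,\varphi_2}(\Omega,w)}\lesssim\|Mf\|_{M_{p,\varphi_2}(\Omega,w)}\lesssim\|f\|_{M_{p,\varphi_1}(\Omega,w)}$ by Theorem~\ref{theor-2.1}.

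The core is the top-order case $|\alpha|=2m$. With $g=(D^{\alpha}u)^{p-1}w$, so that $\int_{\Omega(x,r)}|g|^{p'}w^{-p'/p}=\int_{\Omega(x,r)}|D^{\alpha}u|^{p}w$, I would apply Lemma~\ref{lemm-2.4} (with $g$ replaced by $g\chi_{\Omega(x,r)}$; its hypotheses hold for $L$ via \eqref{eq-2.7} and Lemma~\ref{lemm-2.3}) to bound $\int_{\Omega(x,r)}|D^{\alpha}u\,g|$ by a constant times
$$
\int_{\Omega(x,r)}K^{\ast}f\,|g|+\int_{\Omega(x,r)}Mf\,|g|+\int_{\Omega(x,r)}Mg\,|f|+\int_{\Omega(x,r)}|f|\,|g|.
$$
Applying H\"older's inequality with exponents $p,p'$ to each term — using $|f|\le Mf$ a.e. on the last term, and the boundedness of $M$ on $L_{p',w^{-p'/p}}$ (legitimate since $w^{-p'/p}\in A_{p'}(\Omega)$) on the third — and then distributing the factors $\varphi_2^{-1}(x,r)$ and $w(\Omega(x,r))^{-1/p}$ by means of $\tfrac1p=\tfrac1{p^2}+\tfrac1{pp'}$, one obtains
$$
\|D^{\alpha}u\|_{M_{p,\varphi_2}(\Omega,w)}\lesssim\Big(\|K^{\ast}f\|_{M_{p,\varphi_2}(\Omega,w)}^{1/p}+\|Mf\|_{M_{p,\varphi_2}(\Omega,w)}^{1/p}+\|f\|_{M_{p,\varphi_2}(\Omega,w)}^{1/p}\Big)\,\|D^{\alpha}u\|_{M_{p,\varphi_2}(\Omega,w)}^{1/p'}.
$$
Now Theorem~\ref{Kuzuf1} gives $\|K^{\ast}f\|_{M_{p,\varphi_2}(\Omega,w)}\lesssim\|f\|_{M_{p,\varphi_1}(\Omega,w)}$, Theorem~\ref{theor-2.1} gives $\|Mf\|_{M_{p,\varphi_2}(\Omega,w)}\lesssim\|f\|_{M_{p,\varphi_1}(\Omega,w)}$, and since $|f|\le Mf$ a.e. one also has $\|f\|_{M_{p,\varphi_2}(\Omega,w)}\le\|Mf\|_{M_{p,\varphi_2}(\Omega,w)}\lesssim\|f\|_{M_{p,\varphi_1}(\Omega,w)}$; therefore $\|D^{\alpha}u\|_{M_{p,\varphi_2}(\Omega,w)}\lesssim\|f\|_{M_{p,\varphi_1}(\Omega,w)}^{1/p}\,\|D^{\alpha}u\|_{M_{p,\varphi_2}(\Omega,w)}^{1/p'}$.

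Finally, provided one knows a priori that $\|D^{\alpha}u\|_{M_{p,\varphi_2}(\Omega,w)}<\infty$ — which, as at the end of the proof of Theorem~\ref{theor-3.1}, follows from the classical trace theorems in Sobolev spaces together with $w\in A_{p}(\Omega)$, so that the weak solution $u$ of \eqref{eq-4.1} lies in $W^{2m}M_{p,\varphi_2}(\Omega,w)$ — one may divide the last inequality by the finite quantity $\|D^{\alpha}u\|_{M_{p,\varphi_2}(\Omega,w)}^{1/p'}$ to deduce $\|D^{\alpha}u\|_{M_{p,\varphi_2}(\Omega,w)}\lesssim\|f\|_{M_{p,\varphi_1}(\Omega,w)}$; summing over $|\alpha|\le 2m$ yields the asserted estimate. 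The step I expect to be the most delicate is exactly this concluding absorption: the left-hand norm must genuinely be known finite before it is cancelled, and one must be careful that Lemma~\ref{lemm-2.4} (stated globally on $\Omega$) and the $A_{p'}$-boundedness of $M$ invoked for the $\int Mg\,|f|$ term are applied on the correct sets. Everything else is routine bookkeeping of the two-$\varphi$ bounds once Theorems~\ref{theor-2.1} and~\ref{Kuzuf1} are available.
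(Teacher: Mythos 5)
Your proposal is correct and follows essentially the same route as the paper: the paper derives Corollary~\ref{cor-4.1} in one line by citing Theorem~\ref{Kuzuf1}, Theorem~\ref{theor-4.1} and Lemma~\ref{lemm-2.4}, i.e.\ by rerunning the proof of Theorem~\ref{theor-3.1} with the two-function bounds $M,K^{\ast}\colon M_{p,\varphi_1}(\Omega,w)\to M_{p,\varphi_2}(\Omega,w)$, which is exactly what you carry out in detail. Your explicit treatment of the term $\|f\|_{M_{p,\varphi_2}}$ via $|f|\le Mf$ and your flagging of the a priori finiteness needed for the absorption step are welcome additions that the paper leaves implicit.
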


\end{document}